\newtheorem{theorem}{Theorem}[section]
\newtheorem{lemma}[theorem]{Lemma}
\newtheorem{corollary}{Corollary}[theorem]
\def\Xint#1{\mathchoice
  {\XXint\displaystyle\textstyle{#1}}%
  {\XXint\textstyle\scriptstyle{#1}}%
  {\XXint\scriptstyle\scriptscriptstyle{#1}}%
  {\XXint\scriptscriptstyle\scriptscriptstyle{#1}}%
  \!\int}
\def\XXint#1#2#3{{\setbox0=\hbox{$#1{#2#3}{\int}$}
  \vcenter{\hbox{$#2#3$}}\kern-.5\wd0}}
\def\dashint{\Xint-}
\author{Chengyang Shao}
\address{MIT, Dept. of Math.\\
77 Massachusetts Avenue, Cambridge, MA 02139-4307.}
\title{Schauder Type Estimates for a Class of Hypoelliptic Operators}
\begin{document}
\maketitle
\begin{spacing}{1.2}
\section{Introduction}
\subsection{Schauder Estimates and Generalizations}
The Schauder estimate is a fundamental tool for the regularity theory of elliptic and parabolic partial differential operators. Its classical and most widely used version gives a $C^{2,\alpha}$ \emph{a priori} bound
$$|u|_{2,\alpha;\Omega'}\leq C(a,b,c,\Omega',\Omega)(|u|_{0;\Omega}+|f|_{\alpha;\Omega})$$
for any $C^{2,\alpha}$ solution $u$ of a uniformly elliptic equation:
$$\sum_{i,j}a_{ij}D_iD_j u+\sum_{k}b_kD_ku+cu=f.$$
The constant $C$ depends on the ellipticity constants and $C^{\alpha}$-norms of the coefficients. Such estimates can be generalized in several ways, both to more general equations (weak form or parabolic equations) and to equations allowing weaker regularity assumption on coefficients.

The present paper aims to combine the results given by Wang in \cite{Wang} and Dong, Kim in \cite{DK} to give a generalization of the Schauder estimate for the higher-order hypo-elliptic operators treated by Simon in \cite{Simon1}, which include the so-called semi-elliptic operators (see H\"{o}rmander \cite{Hor}, pp. 67-68) as a special case. The results in the present paper apply to parabolic equations of higher order and, for example, operators like $\partial_t^2-\partial_x^3$. Generalization to systems is also directly obtained.

Let us briefly review the results cited above. The simplest proof for second-order elliptic and parabolic operators (with real coefficients) was given by Wang \cite{Wang} with an essential use of the maximum principle, allowing Dini-continuous coefficients and right-hand-side. He obtained a sharp estimate as follows: \emph{for any solution $u\in C^{2,1}(B_R)$ (parabolic quasi-ball) of $\partial_t u-\sum_{i,j} a_{ij}D_iD_ju=f$ with $a_{ij},f\in C(B_R)$,
$$
\begin{aligned}
|D^2 u(x,t)-D^2u(y,s)|&\leq C({R,R'})|u|_0\left(\int_0^r\frac{\omega_a(\rho)}{\rho}d\rho+r\int_0^R\frac{\omega_a(\rho)}{\rho^2}d\rho\right)\\
&+C({R,R'})\left(\int_0^r\frac{\omega_f(\rho)}{\rho}d\rho+r\int_0^R\frac{\omega_f(\rho)}{\rho^2}d\rho\right),
\end{aligned}
$$
where $\omega_a,\omega_f$ is the oscillation of $a_{ij},f$ on $B_R$ respectively, $x,y\in B_{R'}$ with $R'<R$ and $r$ is the parabolic distance between $(x,t)$ and $(y,s)$.}

Another proof by Simon \cite{Simon1}, though only concerned with the H\"{o}lder estimate, generalized the Schauder estimate to a class of higher-order hypo-elliptic operators with a neat blow-up argument. The result of Dong and Kim \cite{DK} was similar to Wang's, was obtained by a delicate modification of Campanato's method, and required weaker regularity assumptions on the coefficients and right-hand-side, namely the \emph{Dini-mean-oscillation property}.

\subsection{General Settings of the Paper and the Main Result}
The general settings are milder than those given by Simon in \cite{Simon1}. Fix an index $\kappa\in\mathbb{N}_+^n$ with $\kappa_l>0\,\forall l$. Set the $\kappa$-(quasi-)distance as
$$|x-y|_\kappa:=\sqrt{\sum_{l=1}^n|x^l-y^l|^{2/\kappa_l}}.$$
The $\kappa$-ball centered at $x$ with radius $r$ will be denoted as $B^\kappa_r(x)$. Obviously $B^\kappa_r(x)$ is star-shaped with respect to $x$. Its boundary $\partial B^\kappa_r(x)$ is diffeomorphic to $\partial B(x,r)\approx S^{n-1}$ via the map $\partial B^\kappa_r(x)\ni y\to r(y-x)/|y-x|$, except on an algebraic subset of dimension at most $n-2$. For any $r>0$, define the anisotropic dilation operator $T_r$ by
$$T_r(x^1,\cdots,x^n)=(r^{\kappa_1}x^1,\cdots,r^{\kappa_n}x^n).$$
Then $T_r^{-1}=T_{1/r}$, $|T_rx|_\kappa=r|x|_\kappa$, $\det T_r=r^{|\kappa|}$. Hence
$$|B^{\kappa}_r(0)|=r^{|\kappa|}|B^\kappa_1(0)|.$$
Note that these constructions turn $(\mathbb{R}^n,|\cdot|_\kappa)$ together with the Lebesgue measure into a space of homogeneous type in the sense of Coifman and Weiss \cite{CW}.

Let $\mathcal{A},\mathcal{B}\subset\mathbb{N}_0^n$ (non-negative integers) be finite index sets such that for some fixed $m$,
$$(\alpha+\beta)\cdot\kappa=m$$
for all $\alpha\in\mathcal{A}$, $\beta\in\mathcal{B}$. Suppose $\mathcal{A}$ is \emph{$\mathcal{B}$-complete}, in the sense that $\mathcal{A}$ coincides with the collection of indices $\alpha\in\mathbb{N}_0^n$ such that $(\alpha+\beta)\cdot\kappa=m\,\forall\beta\in\mathcal{B}$. Define  $\mathcal{A}'$ to be the collection of $\gamma\in\mathbb{N}_0^n$ such that $(\gamma+\beta)\cdot\kappa\leq m\,\forall\beta\in\mathcal{B}$. Throughout the paper, the notation on indices will be fixed. We are also going to employ the following convention: for any domain $\Omega\subset\mathbb{R}^n$, if $\mathfrak{F}(\Omega)$ is a subspace of $\mathfrak{D}'(\Omega)$, then $\mathfrak{F}^\mathcal{A}(\Omega)$ will denote the collection of all $u\in\mathfrak{D}'(\Omega)$ such that $D^\alpha u\in\mathfrak{F}(\Omega)$ for all $\alpha\in\mathcal{A}$.

A partial differential operator $P$ of the form
\begin{equation}\label{Constprin}
P=\sum_{\alpha\in\mathcal{A},\beta\in\mathcal{B}}D^{\beta}(a_{\alpha\beta}D^\alpha)
\end{equation}
is said to be \emph{$\kappa$-homogeneous of order $m$}. For the moment, assume that all coefficients are (possibly complex) constants. The symbol of $P$ is
$$p(\xi)=\sum_{\alpha\in\mathcal{A},\beta\in\mathcal{B}}a_{\alpha\beta}(i\xi)^{\alpha+\beta},$$
and satisfies
\begin{equation}\label{kappahom}
p(T_r\xi)=r^{m}p(\xi).
\end{equation}
Assume that $P$ is \emph{hypoelliptic}; equivalently, this means $p(\xi)\neq0$ for $\xi\in\mathbb{R}^n\setminus\{0\}$, by standard criteria for hypoellipticity. Then
$$
|\xi|^m_\kappa\leq C\left|p\left(T^{-1}_{|\xi|_\kappa}\xi\right)\right||\xi|_\kappa^m=C|p(\xi)|,
$$
where $C=\left(\inf_{\partial B^\kappa_1(0)}|p|\right)^{-1}=C(m,n,\{a_{\alpha\beta}\})$. We thus require the following condition:
\begin{equation}\label{hypoell}
\lambda|\xi|^m_\kappa\leq |p(\xi)|\leq\Lambda|\xi|^m_\kappa.
\end{equation}
This slightly generalizes the notion of semi-elliptic differential operators, which is defined, for example, in \cite{Hor}, p.67. The main result is now stated as follows, which generalize the results given by Wang in \cite{Wang} and Dong, Kim in \cite{DK}.
\begin{theorem}\label{VariablePrin}
Let $u\in C^{\mathcal{A}}(B^\kappa_{R_0})$ solve the equation
$$Pu=\sum_{\beta\in\mathcal{B}}\sum_{\alpha\in\mathcal{A}}D^\beta(a_{\alpha\beta}D^\alpha u)=\sum_{\beta\in\mathcal{B}}D^\beta f_\beta.$$
Define
$$\bar\omega_a(\rho):=\sup_{B^\kappa_\rho(x_0)\Subset B^\kappa_{R_0}}\sum_{\alpha\in\mathcal{A},\beta\in\mathcal{B}}\dashint_{B^\kappa_\rho(x_0)}|a_{\alpha\beta}(x)-a_{\alpha\beta}(x_0)|dx,$$
$$\bar\omega_f(\rho):=\sup_{B^\kappa_\rho(x_0)\Subset B^\kappa_{R_0}}\sum_{\beta\in\mathcal{B}}\dashint_{B^\kappa_\rho(x_0)}|f_{\beta}(x)-f_{\beta}(x_0)|dx.$$
Suppose the following condition: for $p(x,\xi)=\sum_{\alpha\in\mathcal{A},\beta\in\mathcal{B}}a_{\alpha\beta}(x)(i\xi)^{\alpha+\beta}$, there are constants $0<\lambda<\Lambda$ such that for all $(x,\xi)\in\mathbb{R}^n\times\mathbb{R}^n$,
\begin{equation}\label{Varellip}
\lambda|\xi|_\kappa^m\leq|p(x,\xi)|\leq\Lambda|\xi|^m_\kappa.
\end{equation}
Then for any $\gamma,\theta\in(0,1)$, we have for all $\alpha\in\mathcal{A}$ that for $C=C(\lambda,\Lambda,\kappa,\gamma,\theta,B^\kappa_{R_0})$,
$$
\begin{aligned}
\sup_{\substack{x,y\in B^\kappa_{\theta R_0} \\ |x-y|_\kappa\leq r}}|&D^\alpha u(x)-D^\alpha u(y)|\\
&\leq C\left([|u|]_{\mathcal{A},\infty;B^\kappa_{R_0}}r^\gamma+
\int_0^{r}\frac{\bar\omega_f(\rho)}{\rho}d\rho+r^\gamma\int_{r}^{R_0}\frac{\bar\omega_f(\rho)}{\rho^{1+\gamma}}d\rho\right)\\
&\quad\quad\quad+C[|u|]_{\mathcal{A},\infty;B^\kappa_{R_0}}\left(
\int_0^{r}\frac{\bar\omega_a(\rho)}{\rho}d\rho+r^\gamma\int_{r}^{R_0}\frac{\bar\omega_a(\rho)}{\rho^{1+\gamma}}d\rho\right).
\end{aligned}
$$
\end{theorem}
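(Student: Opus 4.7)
The plan is to carry out a Campanato-type mean-oscillation iteration in the spirit of Dong--Kim, with the anisotropic $\kappa$-ball replacing the Euclidean ball and the $L^2$ solvability theory for the frozen-coefficient operator $P_0:=\sum_{\alpha,\beta}a_{\alpha\beta}(x_0)D^{\alpha+\beta}$ replacing Campanato's use of harmonic functions. Introduce
\[
\Phi_\alpha(\rho,x_0;h):=\dashint_{B^\kappa_\rho(x_0)}\bigl|D^\alpha h-(D^\alpha h)_{B^\kappa_\rho(x_0)}\bigr|\,dx,\qquad\alpha\in\mathcal{A},\ B^\kappa_\rho(x_0)\Subset B^\kappa_{R_0}.
\]
Everything reduces to a one-step decay of the shape
\[
\Phi_\alpha(\vartheta\rho,x_0;u)\leq C\vartheta^\gamma\Phi_\alpha(\rho,x_0;u)+C\bigl(\bar\omega_f(\rho)+[|u|]_{\mathcal{A},\infty;B^\kappa_{R_0}}\bar\omega_a(\rho)\bigr),
\]
valid for any $\gamma\in(0,1)$ and a fixed small $\vartheta=\vartheta(\gamma,\lambda,\Lambda,\kappa)\in(0,1/2)$ chosen after $\gamma$ to absorb the front constant. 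Iterating along $\rho=\vartheta^k R_0$ and summing the resulting geometric series converts the Dini mean oscillations of $a$ and $f$ into exactly the two integral expressions in the theorem, and a standard Campanato--Morrey-type lemma on the homogeneous-type space $(\mathbb{R}^n,|\cdot|_\kappa)$ transfers Dini control of $\Phi_\alpha$ to the claimed pointwise modulus of continuity of $D^\alpha u$.

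First I would establish the decay at the \emph{frozen-coefficient} level. Since $p(x_0,\xi)$ obeys \eqref{hypoell} with the same constants $\lambda,\Lambda$, Plancherel combined with the $\kappa$-homogeneity \eqref{kappahom} yields the basic $L^2$ solvability bound
\[
\sum_{\alpha\in\mathcal{A}}\|D^\alpha w\|_{L^2(\mathbb{R}^n)}\leq C(\lambda,\Lambda,\kappa)\sum_{\beta\in\mathcal{B}}\|g_\beta\|_{L^2(\mathbb{R}^n)},\qquad P_0 w=\sum_\beta D^\beta g_\beta,
\]
which scales correctly under the dilations $T_r$. Hypoellipticity makes any distributional solution of $P_0 v=0$ real-analytic and hence $D^\alpha v$ smooth, so by rescaling from $B^\kappa_1$ to $B^\kappa_R(x_0)$ one obtains, for every $\gamma\in(0,1)$,
\[
\Phi_\alpha(\rho,x_0;v)\leq C(\gamma,\lambda,\Lambda,\kappa)(\rho/R)^\gamma\,\Phi_\alpha(R,x_0;v),\qquad\rho\leq R/2,
\]
for solutions of $P_0v=0$ on $B^\kappa_R(x_0)$. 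This is the anisotropic hypoelliptic analogue of Campanato's harmonic decay; the exponent $\gamma$ may be taken arbitrarily close to $1$ thanks to the smoothness of $D^\alpha v$.

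The second step is the standard freeze-and-perturb split. On $B^\kappa_R(x_0)$ the equation reads $P_0u=\sum_\beta D^\beta\tilde f_\beta$ with
\[
\tilde f_\beta:=f_\beta-\sum_{\alpha\in\mathcal{A}}\bigl(a_{\alpha\beta}(x)-a_{\alpha\beta}(x_0)\bigr)D^\alpha u.
\]
Write $u=v+w$, where $w$ is an $L^2$-solution on $\mathbb{R}^n$ of
\[
P_0 w=\sum_\beta D^\beta\!\left[\bigl(\tilde f_\beta-(\tilde f_\beta)_{B^\kappa_R(x_0)}\bigr)\chi_{B^\kappa_R(x_0)}\right]
\]
and $v=u-w$ satisfies $P_0v=0$ in $B^\kappa_R(x_0)$ (the constant shifts are annihilated by the non-trivial $D^\beta$; a possible $\beta=0$ term is handled by a minor bookkeeping step). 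The $L^2$ bound above, Poincar\'e on $\kappa$-balls, and H\"older's inequality give
\[
\Phi_\alpha(R,x_0;w)\leq C\sum_\beta\dashint_{B^\kappa_R(x_0)}|\tilde f_\beta-(\tilde f_\beta)_{B^\kappa_R(x_0)}|\,dx\leq C\bar\omega_f(R)+C[|u|]_{\mathcal{A},\infty;B^\kappa_{R_0}}\bar\omega_a(R).
\]
Combining this with the homogeneous decay applied to $v$ via the triangle inequality yields the one-step recursion stated above.

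The main obstacle I anticipate is the frozen-coefficient decay. In the classical second-order elliptic setting one simply invokes the mean-value property, but here $P_0$ is higher-order, anisotropic, and only hypoelliptic, so the Fourier-side solvability bound plus careful scaling under $T_r$ has to do all the work; in particular one must verify that the Sobolev-type interior regularity of $v$ transfers to oscillation estimates for $D^\alpha v$ itself, with constants depending only on $\lambda,\Lambda,\kappa,\gamma$, and that the $\mathcal{B}$-completeness of $\mathcal{A}$ is genuinely used at the right moment to recover every derivative in $\mathcal{A}$ simultaneously. All remaining pieces---the perturbation split, the Dini-series summation, the handling of the shrinking $\theta R_0$ versus $R_0$ at the boundary, and the final Campanato lemma on $(\mathbb{R}^n,|\cdot|_\kappa)$---follow the standard template with only cosmetic modifications from the isotropic second-order case.
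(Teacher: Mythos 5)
Your high-level plan — freeze coefficients at $x_0$, split $u=v+w$ with $P_0v=0$ and $w$ absorbing the data, get polynomial decay for $v$, a linear bound for $w$, iterate, and pass to modulus of continuity — is indeed the paper's strategy, and your decay estimate for the homogeneous piece is essentially Lemma \ref{Campanato1}. However, there is a genuine gap in the perturbation step.

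You claim that ``the $L^2$ bound above, Poincar\'e on $\kappa$-balls, and H\"older's inequality give''
\[
\Phi_\alpha(R,x_0;w)\leq C\sum_\beta\dashint_{B^\kappa_R(x_0)}\bigl|\tilde f_\beta-(\tilde f_\beta)_{B^\kappa_R(x_0)}\bigr|\,dx .
\]
This step does not hold. An $L^2$ solvability bound controls $\|D^\alpha w\|_{L^2}$ by $\|\tilde f_\beta\|_{L^2}$, and H\"older lets you pass from $L^2$ of $D^\alpha w$ down to its $L^1$ average; but you then need $\|\tilde f_\beta-(\tilde f_\beta)_R\|_{L^2(B^\kappa_R)}$ to be controlled by $\dashint|\tilde f_\beta-(\tilde f_\beta)_R|$, and H\"older runs the \emph{wrong} way there. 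The best you can do by interpolating against the $L^\infty$ bound is a square-root, i.e.\ $\Phi_\alpha(R;w)\lesssim\bigl(\bar\omega_f(R)+[|u|]_{\mathcal{A},\infty}\bar\omega_a(R)\bigr)^{1/2}$, which destroys the iteration (you would not recover the Dini integrals, only their square root). The point is that the solution operator for $P_0$ is \emph{not} bounded on $L^1$; it is only of weak $(1,1)$ type. This is exactly the obstruction that Dong--Kim introduced the quasi-norm $[|\cdot|]_{\mathcal{A},p}$ with $0<p<1$ to handle, and what the present paper does in Lemma \ref{Campanato2}(B): the weak $(1,1)$ inequality of Theorem \ref{Mult} gives an $L^p$-quasinorm bound linear in $\|\tilde f_\beta\|_{L^1}$, which is then combined with Lemma \ref{Campanato1} to produce the one-step decay you want. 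To repair your argument you must replace the $L^1$-mean-oscillation quantity $\Phi_\alpha$ by the $L^p$ version with $0<p<1$, use the weak $(1,1)$ bound for the parametrix piece, and only at the very end upgrade via Dong--Kim's argument (as the paper does in the proof of Theorem \ref{VariablePrin}).

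A secondary, smaller point: your ``$L^2$-solution on $\mathbb{R}^n$'' of $P_0w=\sum_\beta D^\beta[(\tilde f_\beta-(\tilde f_\beta)_R)\chi]$ is not well defined as stated, because the multiplier $(i\xi)^\beta/p_0(\xi)$ blows up at $\xi=0$ whenever $\beta\cdot\kappa<m$. The paper avoids this by using the \emph{parametrix} $(1-\phi(\xi))/p_0(\xi)$ (cut off near the origin) rather than the exact inverse, which changes $w$ only by a smooth remainder that the Campanato lemma can absorb. You should adopt the same device.
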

Other interior estimates can be deduced as corollaries of this main result. If the coefficients happen to be constants, then a different result, which is obtained from a completely different harmonic-analytic argument, is valid:
$$
\begin{aligned}
&\sup_{\substack{x,y\in B^\kappa_{\theta R_0} \\ |x-y|_\kappa\leq r}}|D^\alpha u(x)-D^\alpha u(y)|\\
&\leq C\left([|u|]_{\mathcal{A},\infty;B^\kappa_{R_0}}+\sup_{B^\kappa_{R_0}}\sum_{\beta\in\mathcal{B}}|f_\beta|\right)r
+C\left(\int_0^{r}\frac{\omega_f(\rho)}{\rho}d\rho+r\int_{r}^{R_0}\frac{\omega_f(\rho)}{\rho^{2}}d\rho\right),
\end{aligned}
$$
Where $\omega$ denotes the usual oscillation. The advantage of these results is that they unify the case of strong and weak solutions, and reveal the link between the algebraic property of the principal symbol (i.e. $\kappa$-homogeneity) and the analytic property (i.e. modulus of continuity) of the solutions.

We point out that the assumption of $\mathcal{B}$-completeness is just for convenience considerations; furthermore, given index sets $\mathcal{A},\mathcal{B}$, the existence of a differential operator satisfying the hypoellipticity requirement actually implies structural results concerning the index sets $\mathcal{A},\mathcal{B}$ themselves. These structural results are obtained as corollaries of the interior estimates stated above. Let us present an anisotropic Sobolev-type theorem as an illustrative example:
\begin{theorem}\label{Sobolev}
Suppose $u\in W^{\mathcal{A},p}(B^\kappa_R)$. Fix a $\theta\in(0,1)$. If $1\leq p<|\kappa|$, then for all $\alpha'\in\mathcal{A}'$ and all $q\in[1,|\kappa|p/(|\kappa|-p))$, the following inequality is valid:
$$\|D^{\alpha'} u\|_{q;B^\kappa_{\theta R}}\leq C\sum_{\alpha\in\mathcal{A}}\|D^\alpha u\|_{p;B^\kappa_R},$$
where $C=C(p,q,\theta,\lambda,\Lambda,B^\kappa_R)$. If $p>|\kappa|$, then for all $\alpha'\in\mathcal{A}'$, $D^{\alpha'} u$ is continuous, and in fact for all $x,y\in B^\kappa_{\theta R}$, the following inequality is valid:
$$|D^{\alpha'} u(x)-D^{\alpha'} u(y)|\leq C|x-y|_\kappa^{1-p/|\kappa|}\sum_{\alpha\in\mathcal{A}}\|D^\alpha u\|_{p;B^\kappa_R},$$
where $C=C(p,\theta,\lambda,\Lambda,B^\kappa_R)$.
\end{theorem}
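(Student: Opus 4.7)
The plan is to prove Theorem~\ref{Sobolev} by representing $D^{\alpha'} u$ for $\alpha' \in \mathcal{A}'$ as a sum of anisotropic convolutions of the $\mathcal{A}$-derivatives of $u$ against derivatives of the fundamental solution $\Gamma$ of a frozen, constant-coefficient, hypoelliptic, $\kappa$-homogeneous operator, and then to invoke Calder\'on--Zygmund and Hardy--Littlewood--Sobolev theory on the space of homogeneous type $(\mathbb{R}^n, |\cdot|_\kappa, dx)$. The crucial algebraic input, forced by $\mathcal{B}$-completeness of $\mathcal{A}$, is that all $\beta \in \mathcal{B}$ share the same weighted length $\beta \cdot \kappa =: k$ (otherwise $\mathcal{A}$ is empty); consequently, for each $\alpha' \in \mathcal{A}'$, the kernels $D^{\alpha'+\beta}\Gamma$ appearing in the representation are all $\kappa$-homogeneous of the \emph{same} degree $-|\kappa| + s_{\alpha'}$, where $s_{\alpha'} := (m-k) - \alpha' \cdot \kappa \in \mathbb{Z}_{\geq 0}$ vanishes precisely when $\alpha' \in \mathcal{A}$ and is at least $1$ for $\alpha' \in \mathcal{A}' \setminus \mathcal{A}$.

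To carry this out, freeze the coefficients at a point of $B^\kappa_R$ to obtain a constant-coefficient hypoelliptic operator $P_0 = \sum_{\alpha \in \mathcal{A}, \beta \in \mathcal{B}} \bar a_{\alpha\beta} D^{\alpha+\beta}$ satisfying \eqref{hypoell} with constants comparable to $\lambda, \Lambda$, and let $\Gamma$ be its fundamental solution. By standard theory of $\kappa$-homogeneous hypoelliptic operators, $\Gamma$ is smooth off the origin and, modulo polynomial/logarithmic corrections that appear when $m \geq |\kappa|$ and are annihilated by sufficiently many derivatives, is $\kappa$-homogeneous of degree $m - |\kappa|$. Fix $\eta \in C_c^\infty(B^\kappa_R)$ with $\eta \equiv 1$ on $B^\kappa_{\theta R}$. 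The identity $\eta u = \Gamma * P_0(\eta u)$ (extended by zero to $\mathbb{R}^n$), together with integration by parts in convolution, yields
$$
D^{\alpha'}(\eta u) = \sum_{\alpha \in \mathcal{A},\, \beta \in \mathcal{B}} \bar a_{\alpha\beta}\, (D^{\alpha'+\beta}\Gamma) * D^\alpha(\eta u).
$$
Expanding $D^\alpha(\eta u)$ by Leibniz produces the principal term $\eta D^\alpha u$ (controlled in $L^p(B^\kappa_R)$ by hypothesis) and lower-order terms involving $D^\gamma u$ with $\gamma \cdot \kappa < \alpha \cdot \kappa$ against derivatives of $\eta$; these lower-order terms concern indices $\gamma \in \mathcal{A}'$ with strictly larger $s$-value and will be handled by downward induction on $\alpha' \cdot \kappa$.

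For $\alpha' \in \mathcal{A}' \setminus \mathcal{A}$, one has $s_{\alpha'} \geq 1$ and $D^{\alpha'+\beta}\Gamma$ is an anisotropic Riesz potential of positive integer order $s_{\alpha'}$; the Hardy--Littlewood--Sobolev inequality on $(\mathbb{R}^n, |\cdot|_\kappa, dx)$ gives $L^p \to L^{q_*}$ with $1/q_* = 1/p - s_{\alpha'}/|\kappa|$ when $s_{\alpha'} p < |\kappa|$, yielding the claimed range $q < |\kappa|p/(|\kappa|-p)$ (worst case $s_{\alpha'} = 1$) after interpolation with $L^p$, and sends $L^p$ into the anisotropic H\"older class of exponent $s_{\alpha'} - |\kappa|/p \geq 1 - |\kappa|/p$ when $s_{\alpha'} p > |\kappa|$, delivering the pointwise modulus-of-continuity estimate. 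For $\alpha' \in \mathcal{A}$, where $s_{\alpha'} = 0$, the kernel $D^{\alpha'+\beta}\Gamma$ is an anisotropic Calder\'on--Zygmund operator, bounded on $L^p$ for $1 < p < \infty$ by the Coifman--Weiss theory of singular integrals on spaces of homogeneous type. The principal technical obstacle is the rigorous verification of these anisotropic Calder\'on--Zygmund and fractional-integration theorems in the $\kappa$-metric --- namely the $\kappa$-H\"ormander regularity condition for $D^{\alpha'+\beta}\Gamma$ and its cancellation on $\kappa$-annuli, both reducing, via the $\kappa$-homogeneity of $\Gamma$ and the identity $P_0\Gamma = \delta$ off $0$, to direct kernel computations --- together with the combinatorial bookkeeping for the Leibniz iteration and for the polynomial/logarithmic corrections to $\Gamma$ in the regime $m \geq |\kappa|$.
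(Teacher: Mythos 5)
Your approach is in the same family as the paper's (which is only sketched there) but genuinely different in execution. The paper works on the Fourier side: it takes the truncated multiplier $m(\xi)=\frac{1-\phi(\xi)}{p(\xi)}\sum_{\beta\in\mathcal{B}}(i\xi)^{\alpha'+\beta}$, observes that the low-frequency cutoff makes the Littlewood--Paley blocks $m_j$ vanish for $j\le 0$, bounds $\|\check{m}_j\|_{L^s}$ by direct anisotropic scaling, and sums via Young's inequality, in effect re-deriving the needed fractional-integration estimate without ever touching the fundamental solution $\Gamma$. You instead work with $\Gamma$ on the physical side, recognize $D^{\alpha'+\beta}\Gamma$ as an anisotropic Riesz potential of order $s_{\alpha'}$ (or a Calder\'on--Zygmund kernel when $s_{\alpha'}=0$), and invoke Coifman--Weiss singular-integral theory together with an anisotropic Hardy--Littlewood--Sobolev theorem. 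Your observation that $\mathcal{B}$-completeness forces $\beta\cdot\kappa$ to be constant on $\mathcal{B}$ (so that all kernels $D^{\alpha'+\beta}\Gamma$ share a common homogeneity degree) is correct and is an unstated input of the paper's computation as well.

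That said, your route carries costs that the paper's low-frequency cutoff is specifically designed to avoid, and one of them is more than bookkeeping. First, as you flag, the identity $\eta u=\Gamma*P_0(\eta u)$ forces you to construct and analyze $\Gamma$ including its polynomial/logarithmic corrections when $m\ge|\kappa|$, whereas the multiplier $\frac{1-\phi(\xi)}{p(\xi)}$ is globally smooth and needs no such corrections. Second, the Leibniz-plus-induction localization does not close as stated: the commutator terms $(D^{\alpha-\gamma}\eta)D^\gamma u$ with $\gamma<\alpha$, $\alpha\in\mathcal{A}$, produce indices $\gamma$ ranging over \emph{all} weighted lengths $\gamma\cdot\kappa<m-k$, and these appear in the representation of $D^{\alpha'}(\eta u)$ for \emph{every} $\alpha'\in\mathcal{A}'$, so an induction on $\alpha'\cdot\kappa$ (downward, as you wrote, or upward) is circular. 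One needs a shrinking-ball absorption scheme seeded by an interpolation inequality (as in Lemma~\ref{Campanato1}) or, as the paper does, to route the localization through the parametrix machinery developed around Theorem~\ref{Local}. Finally, your representation makes visible a defect of the theorem statement itself: when $0\notin\mathcal{A}$ the index $\alpha'=0$ lies in $\mathcal{A}'$, and no bound on $\|u\|_q$ by $\sum_{\alpha\in\mathcal{A}}\|D^\alpha u\|_p$ alone can hold (constants are a counterexample); a term $\|u\|_{p;B^\kappa_R}$ is needed on the right, as in Theorem~\ref{Local}. You should also double-check the claimed H\"older exponent $1-p/|\kappa|$ (negative for $p>|\kappa|$); the anisotropic Morrey computation you invoke correctly gives $1-|\kappa|/p$.
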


\section{Constant Coefficient Case}
\subsection{A Multiplier Theorem and Global Estimates}
To deduce estimates on the modulus of continuity in the constant coefficient case, we first present the following multiplier theorem.
\begin{theorem}\label{Mult}
Let $m$ be bounded and smooth on $\mathbb{R}^n\setminus\{0\}$. Assume that for all multi-indices $\gamma$,
\begin{equation}\label{MihHor}
\sup_{R>0}R^{-|\kappa|+\gamma\cdot\kappa}\int_{R<|\xi|_\kappa<2R}|D^\gamma m(\xi)|d\xi=A_\gamma<\infty.
\end{equation}

(A) The operator $f\to(m\hat{f})^\vee$ is of weak (1,1) type and strong $(p,p)$ type for $p\in(1,\infty)$.

(B) Define the oscillation of $f$ to be $\omega_f(\rho):=\sup_{|x-y|_\kappa\leq \rho}|f(x)-f(y)|$. If
$$\int_0^\delta\frac{\omega_f(\rho)}{\rho}d\rho<\infty,\,\int_r^{\infty}\frac{\omega_f(\rho)}{\rho^2}d\rho=o(r^{-1}),\,r\to0,$$
then $u:=(m\hat{f})^\vee$ is represented by a continuous function of polynomial growth, and there exists a polynomial $Q$ such that
\begin{equation}\label{normcont}
\begin{aligned}
|[u(x)-Q(x)]-&[u(y)-Q(y)]|\\
&\leq C\left(\int_{0}^{|x-y|_\kappa}\frac{\omega_f(\rho)}{\rho}d\rho
+|x-y|_\kappa\int_{|x-y|_\kappa}^\infty\frac{\omega_f(\rho)}{\rho^2}d\rho\right),
\end{aligned}
\end{equation}
where the constant $C$ only depends on $m$. If either $m$ is supported away from 0 or $f\in L^p(\mathbb{R}^n)$ with $p\in(1,\infty)$, then the polynomial $Q$ can be taken as zero.
\end{theorem}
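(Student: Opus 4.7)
\emph{Plan for Theorem \ref{Mult}.} The plan is to view $(\mathbb{R}^n,|\cdot|_\kappa,dx)$ as a space of homogeneous type in the sense of Coifman-Weiss, and deduce both parts from pointwise estimates on the kernel $K:=m^\vee$ obtained via an anisotropic Littlewood-Paley decomposition. First I would choose $\psi\in C_c^\infty(\mathbb{R}^n)$ supported in $\{1/2<|\xi|_\kappa<2\}$ with $\sum_{j\in\mathbb{Z}}\psi(T_{2^{-j}}\xi)=1$ for $\xi\neq 0$, and set $m_j(\xi)=m(\xi)\psi(T_{2^{-j}}\xi)$. Hypothesis \eqref{MihHor} translates, under the substitution $\eta=T_{2^{-j}}\xi$, into uniform ($j$-independent) bounds on all derivatives of the rescaled localized multiplier $\tilde m_j(\eta):=m(T_{2^j}\eta)\psi(\eta)$. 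Iterated integration by parts in the Fourier inversion of $\tilde m_j$, choosing for each $w$ a coordinate direction $l$ with $|w_l|\gtrsim|w|_\kappa^{\kappa_l}$ (and exploiting $\kappa_l\geq 1$, which comes from $\kappa\in\mathbb{N}_+^n$), yields rapidly decaying estimates
$$|K_j(x)|\leq C_N\,2^{j|\kappa|}(1+2^j|x|_\kappa)^{-N},$$
together with analogous bounds for derivatives. Summing over $j$ produces the pointwise estimates $|D^\gamma K(x)|\lesssim|x|_\kappa^{-|\kappa|-\gamma\cdot\kappa}$ for $x\neq 0$.

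For Part (A), $L^2$-boundedness follows from Plancherel (since $m\in L^\infty$ by hypothesis). The gradient estimate combined with a componentwise mean value argument yields the anisotropic Hörmander condition $\int_{|x|_\kappa>2|y|_\kappa}|K(x-y)-K(x)|\,dx\leq C$. The Calderón-Zygmund theorem on the homogeneous type space $(\mathbb{R}^n,|\cdot|_\kappa,dx)$ then yields weak $(1,1)$, and Marcinkiewicz interpolation together with duality (as $\bar m$ satisfies the same hypothesis) gives strong $(p,p)$-boundedness for $1<p<\infty$.

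For Part (B), the same mean value argument refines to the pointwise smoothness estimate
$$|K(x-y)-K(x)|\lesssim\frac{|y|_\kappa}{|x|_\kappa^{|\kappa|+1}},\qquad |x|_\kappa\geq 2|y|_\kappa,$$
valid precisely because $\kappa_l\geq 1$ allows replacing the raw bound $(|y|_\kappa/|x|_\kappa)^{\kappa_l}$ by the linear factor $|y|_\kappa/|x|_\kappa$. Fix $x,y$ with $r:=|x-y|_\kappa$ and decompose $f=f(x)+g_1+g_2$, where $g_1=(f-f(x))\chi_{B^\kappa_{3r}(x)}$ and $g_2=(f-f(x))(1-\chi_{B^\kappa_{3r}(x)})$. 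The constant $f(x)$, under convolution with $K$, produces a polynomial-in-$x$ contribution (reflecting the polynomial ambiguity of $(m\hat f)^\vee$ when $f$ is constant and $m$ is singular at $0$), which is absorbed into $Q$. The localized contribution is estimated using $|K(\cdot-z)|\lesssim|\cdot-z|_\kappa^{-|\kappa|}$ and $|g_1(z)|\leq\omega_f(|z-x|_\kappa)$, integrated in anisotropic polar coordinates about both $x$ and $y$, producing the first term $\int_0^{Cr}\omega_f(\rho)/\rho\,d\rho$. The tail is controlled by applying the smoothness estimate to $K(x-z)-K(y-z)$ for $|z-x|_\kappa\geq 3r$, producing the second term $r\int_{2r}^\infty\omega_f(\rho)/\rho^2\,d\rho$. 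Together these bound \eqref{normcont}, and the continuity and polynomial growth of $u$ follow from the same decomposition.

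Regarding the vanishing of $Q$: if $m$ is supported away from $0$, then $m$ annihilates $\hat p$ for every polynomial $p$, so no polynomial ambiguity arises and $Q=0$; if $f\in L^p(\mathbb{R}^n)$ for some $1<p<\infty$, Part (A) gives $u\in L^p$, and the only polynomial in $L^p$ is zero. The principal technical obstacle is the careful passage from the Mihlin-Hörmander hypothesis to the refined anisotropic kernel smoothness estimate, where the assumption $\kappa_l\geq 1$ is indispensable; secondarily, one must unambiguously identify the polynomial $Q$ by tracking how it arises from the Taylor-type subtraction needed to define $(m\hat f)^\vee$ in the presence of the possible singularity of $m$ at the origin.
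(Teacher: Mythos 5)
Your Part (A) is essentially the paper's argument (Littlewood--Paley blocks, the anisotropic H\"{o}rmander condition, Coifman--Weiss, interpolation and duality). For Part (B) you take a genuinely different route: the paper never assembles the kernel $K=\check{m}$, but works block-by-block with $u_j=(m_j\hat{f})^\vee$, exploiting the built-in cancellations $\int\check{m}_j=0$ and $\int D\check{m}_j=0$ (which hold because $\varphi_j(0)=0$) to write $u_j(x)=\int\check{m}_j(x-z)[f(z)-f(x)]\,dz$, and then sums $\min(2|u_j|_0,\sum_l|D_lu_j|_0|x^l-y^l|)$ over $j$. You instead propose the classical kernel-level decomposition $f=f(x)+g_1+g_2$ with $g_1$ supported in $B^\kappa_{3r}(x)$.

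That kernel-level plan has a genuine gap in the local estimate. You would bound
$$|K*g_1(y)|\le\int_{B^\kappa_{3r}(x)}|K(y-z)|\,\omega_f(|z-x|_\kappa)\,dz$$
using only the size bound $|K(w)|\lesssim|w|_\kappa^{-|\kappa|}$. But $y\in\bar{B}^\kappa_{r}(x)\subset B^\kappa_{3r}(x)$, and near $z=y$ the factor $\omega_f(|z-x|_\kappa)$ is comparable to $\omega_f(r)$, which does not vanish; so the integrand behaves like $\omega_f(r)\,|y-z|_\kappa^{-|\kappa|}$ near $z=y$ and the estimate diverges ($\int_0^\varepsilon\rho^{-|\kappa|}\cdot\rho^{|\kappa|-1}\,d\rho=\infty$ in $\kappa$-polar coordinates). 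Integrating ``about $y$'' does not help, because $\omega_f(|z-x|_\kappa)$ is not a modulus about $y$. Size and smoothness of $K$ alone are insufficient here: the classical argument additionally requires a cancellation hypothesis on $K$ (uniform bounds on $\int_{\rho<|w|_\kappa<R}K\,dw$), which you neither derive from \eqref{MihHor} nor invoke, and which is precisely what each block $\check{m}_j$ enjoys for free. A related looseness is the term $f(x)\cdot(K*1)$: it is not a priori meaningful, and $f(x)$ varies with the pair $(x,y)$, so ``absorbed into $Q$'' elides the step where a single, globally defined polynomial $Q$ must be identified; the paper settles this cleanly by noting that $u-\sum_j[u_j(\cdot)-u_j(0)]$ has Fourier transform supported at the origin. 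If you supply and verify the cancellation hypothesis on $K$, the rest of your plan (including the correct observation that $\kappa_l\ge1$ upgrades $(|y|_\kappa/|x|_\kappa)^{\kappa_l}$ to $|y|_\kappa/|x|_\kappa$) is sound.
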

\emph{Rmark.} Of course assumption (\ref{MihHor}) follows from a direct imitation of the Mihlin¨CH\"{o}rmander multiplier theorem. The proof is in fact quite similar. The result is in fact already covered by the main theorem in Peetre's paper \cite{P}.
\begin{proof}[Proof of theorem \ref{Mult}]
Let $\psi\in C_0^\infty(\mathbb{R}^n)$ be such that $0\leq\psi\leq1$, $\psi$ is supported on $B^\kappa_1(0)$ and equal to 1 on $B^\kappa_{1/2}(0)$, and let $\varphi(\xi)=\psi(T_2^{-1}\xi)-\psi(\xi)$. Then with $\varphi_j(\xi)=\varphi(T_2^{-j}\xi)$, there is a decomposition of 1 for $\xi\neq0$:
$$1=\sum_{j\in\mathbb{Z}}\varphi_j(\xi)=\sum_{j\in\mathbb{Z}}\varphi(T_2^{-j}\xi).$$
Each block is a compactly supported smooth function. Write $m_j=m\varphi_j$. Note that for all $j\in\mathbb{Z}$,
\begin{equation}\label{cancel}
\int_{\mathbb{R}^n}\check{m}_j=m\varphi_j(0)=0,\quad\int_{\mathbb{R}^n}D\check{m}_j=0\cdot m\varphi_j(0)=0.
\end{equation}
By assumption, each $m_j$ is compactly supported and smooth, and
$$\check{m}_j(x)=2^{j|\kappa|}\left[\varphi(\xi)m(T_2^j\xi)\right]^\vee(T_2^jx).$$
Let $(r,\theta)$ be a polar coordinate system with respect to $|\cdot|_\kappa$, i.e., $r(x)=|x|_\kappa$, and $\theta$ is a smooth coordinate on $B^\kappa(0,1)$ (which is defined and smooth on $\partial B^\kappa(0,1)$ except on an $(n-2)$-dimensional algebraic subset of $\partial B^\kappa(0,1)$). Let $\Gamma_\theta$ be the corresponding volume form on $\partial B^\kappa(0,1)$ (which is defined almost everywhere on $\partial B^\kappa(0,1)$), and let $J(r,\theta)$ be the metric determinant of $(r,\theta)$. A direct calculation gives
$$\int_{\partial B^\kappa_1(0)}J(r,\theta)\Gamma_\theta=|\kappa||B^\kappa_1(0)|r^{|\kappa|-1}.$$
Define a function $\eta=\eta_1+\eta_2$ on $[0,\infty)$, where
$$
\begin{aligned}
\eta_1(r):=\sup_{j\in\mathbb{Z}}\int_{\partial B^\kappa_1(0)}
\left|\left[\varphi(\xi)m(T_2^j\xi)\right]^\vee(r\theta)\right|J(r,\theta)\Gamma_\theta,
\end{aligned}
$$
$$
\eta_2(r):=\sup_{j\in\mathbb{Z}}\int_{\partial B^\kappa_1(0)}
\left|\left[\xi^l\varphi(\xi)m(T_2^j\xi)\right]^\vee(r\theta)\right|J(r,\theta)\Gamma_\theta.
$$
Let us compute, for any index $\gamma$, any $y\in\mathbb{R}^n$ and any $j\in\mathbb{Z}$,
$$
\begin{aligned}
\left|y^\gamma\left[\varphi(\xi)m(T_2^j\xi)\right]^\vee(y)\right|&\leq\int_{\mathbb{R}^n}|D^\gamma(\varphi(\xi)m(T_2^j\xi))|d\xi\\
&\leq C_\gamma\sum_{\delta\leq\gamma}\sup|D^{\gamma-\delta}\varphi|
\int_{1/2\leq|\xi|_\kappa\leq2}2^{j\kappa\cdot\delta}|(D^\delta m)(T_2^j\xi)|d\xi\\
&\leq C_\gamma\sum_{\delta\leq\gamma}
2^{-j|\kappa|+j\kappa\cdot\delta}\int_{2^{j-1}\leq|\xi|_\kappa\leq2^{j+1}}|(D^\delta m)(\xi)|d\xi\\
&\leq C_\gamma\sum_{\delta\leq\gamma}A_\delta
\end{aligned}
$$
by assumption (\ref{MihHor}). A similar calculation gives
$$\left|y^\gamma\left[\xi^l\varphi(\xi)m(T_2^j\xi)\right]^\vee(y)\right|\leq C_\gamma\sum_{\delta\leq\gamma}A_\delta.$$
Consequently, for any $k\geq0$, there is a constant $C_k$ independent of $j$ such that
$$r^k\left|\left[\varphi(\xi)m(T_2^j\xi)\right]^\vee(r\theta)\right|
+r^k\left|\left[\xi^l\varphi(\xi)m(T_2^j\xi)\right]^\vee(r\theta)\right|\leq C_k$$
for all $r\in(0,\infty)$, $\theta\in\partial B^\kappa_1(0)$. So $\eta(r)$ is bounded on $(0,\infty)$ and decays faster than any rational function at infinity. This implies that $\sum_{j\in\mathbb{Z}}\check{m}_j$ converges locally in $L^1(\mathbb{R}^n\setminus\{0\})$ to some $L^1_{\text{loc}}$ function $K$, satisfying
$$\sup_{R>0}\int_{R\leq|x|_\kappa\leq 2R}|K(x)|dx<\infty.$$
Note that $K$ coincides with $\check{m}$ on $\mathbb{R}^n\setminus\{0\}$.

(A) For the weak $(1,1)$ inequality, we quote a theorem on singular integrals over a space of homogeneous type; see \cite{CW}, pp.74-75. By this theorem, it suffices to verify the H\"{o}rmander type condition
$$\sup_{y\in\mathbb{R}^n}\int_{|x|_\kappa\geq 2|y|_\kappa}|K(x-y)-K(x)|dx<\infty.$$
The derivation of this inequality is exactly the same as in the standard proof of the Mihlin-H\"{o}rmander theorem, since we already obtained the growth estimate for $D\check{m}_j$. The strong $(p,p)$ inequality with $1<p\leq2$ follows from interpolation, and the case with $p>2$ follows from a dual argument.

(B) Let us put $u_j=(m_j\hat{f})^\vee$. Each $u_j$ is analytic and can be written as a convolution:
$$u_j(x)=\int_{\mathbb{R}^n}f(y)\check{m}_j(x-y)dy.$$
By the cancellation properties (\ref{cancel}), we compute
$$
\begin{aligned}
|u_j(x)|
&=\left|\int_{\mathbb{R}^n}\check{m}_j(x-y)[f(y)-f(x)]dy\right|\\
&\leq\int_{\mathbb{R}^n}|\check{m}_j(x-y)|\omega_f(|x-y|_\kappa)dy\\
&=\int_{\mathbb{R}^n}\left|\left[\varphi(\xi)m(T_2^j\xi)\right]^\vee(y)\right|\omega_f(2^{-j}|y|_\kappa)dy,
\end{aligned}
$$
$$
\begin{aligned}
|D_lu_j(x)|
&=\left|\int_{\mathbb{R}^n}D_l\check{m}_j(x-y)[f_\beta(y)-f_\beta(x)]dy\right|\\
&\leq\int_{\mathbb{R}^n}|D_l\check{m}_j(x-y)|\omega_f(|x-y|_\kappa)dy\\
&=2^{j\kappa_l}\int_{\mathbb{R}^n}\left|\left[\xi^l\varphi(\xi)m(T_2^j\xi)\right]^\vee(y)\right|\omega_f(2^{-j}|y|_\kappa)dy.
\end{aligned}
$$

So, for some constant $C$ depending on the numbers $\{A_\gamma\}$ only,
$$|u_j(x)|
\leq C\int_{0}^\infty\eta(r)\omega_f(2^{-j}r)dr,\,
|D_lu_j(x)|
\leq2^{j\kappa_l}C\int_{0}^\infty\eta(r)\omega_f(2^{-j}r)dr.$$
We compute
$$
\begin{aligned}
\sum_{l=1}^n|D_lu_j|_0|x^l-y^l|
&\leq C\sum_{l=1}^n2^{j\kappa_l}|x^l-y^l|\int_{0}^\infty\eta(r)\omega_f(2^{-j}r)dr\\
&\leq C\sum_{l=1}^n2^{j\kappa_l}|x-y|_{\kappa}^{\kappa_l}\int_{0}^\infty\eta(r)\omega_f(2^{-j}r)dr.
\end{aligned}
$$
So
$$
\begin{aligned}
\sum_{j\in\mathbb{Z}}|u_j(x)-u_j(y)|&\leq\sum_{j\in\mathbb{Z}}\min\left(2|u_j|_0,\sum_{l=1}^n|D_lu_j|_0|x^l-y^l|\right)\\
&\leq C\sum_{l=1}^n\sum_{j\in\mathbb{Z}}\min\left(1,2^{j\kappa_l}|x-y|_\kappa^{\kappa_l}\right)
\int_{0}^\infty\eta(r)\omega_f(2^{-j}r)dr.
\end{aligned}
$$
Splitting the sum into two parts with ${j>-\log_2|x-y|_\kappa}$ and ${j\leq-\log_2|x-y|_\kappa}$ respectively, we obtain
$$
\begin{aligned}
\sum_{j>-\log_2|x-y|_\kappa}&\leq C\sum_{l=1}^n\sum_{j>-\log_2|x-y|_\kappa}\int_{0}^\infty\eta(r)\omega_f(2^{-j}r)dr\\
&\leq C\sum_{j\geq0}\int_{0}^\infty\eta(r)\omega_f(2^{-j}|x-y|_\kappa r)dr,
\end{aligned}
$$
$$
\begin{aligned}
\sum_{j\leq-\log_2|x-y|_\kappa}&\leq\sum_{l=1}^n\sum_{j\leq-\log_2|x-y|_\kappa}2^{j\kappa_l}|x-y|_\kappa^{\kappa_l}
\int_{0}^\infty\eta(r)\omega_f(2^{-j}r)dr\\
&\leq C\sum_{l=1}^n\sum_{j\geq0}\int_{0}^\infty2^{-j\kappa_l}\eta(r)\omega_f(2^{j+1}|x-y|_\kappa r)dr.
\end{aligned}
$$
By the monotonicity property of $\omega_f$, we compute
$$
\begin{aligned}
\sum_{j>-\log_2|x-y|_\kappa}
&\leq C\int_{0}^\infty\eta(r)\sum_{j\geq0}(2^{-j+1}-2^{-j})\frac{\omega_f(2^{-j}|x-y|_\kappa r)}{2^{-j}}dr\\
&\leq C\int_{0}^\infty\eta(r)\int_0^{|x-y|_\kappa r}\frac{\omega_f(\rho)}{\rho}d\rho dr,
\end{aligned}
$$
$$
\begin{aligned}
\sum_{j\leq-\log_2|x-y|_\kappa}
&\leq C\sum_{l=1}^n\int_{0}^\infty\eta(r)\sum_{j\geq0}2^{\kappa_l}(2^{(j+2)\kappa_l}-2^{(j+1)\kappa_l})
\frac{\omega_f(2^{j+1}|x-y|_\kappa r)}{(2^{\kappa_l}-1)2^{2(j+1)\kappa_l}}dr\\
&\leq C\sum_{l=1}^n\int_{0}^\infty\eta(r)(|x-y|_\kappa r)^{2\kappa_l-1}\int_{|x-y|_\kappa r}^\infty
\frac{\omega_f(\rho)}{\rho^{2\kappa_l}}d\rho dr.
\end{aligned}
$$
By Fubini's theorem, we then compute
\begin{equation}\label{sum1}
\sum_{j>-\log_2|x-y|_\kappa}\leq C\int_{0}^\infty\frac{\omega_f(\rho)}{\rho}\int_{|x-y|_\kappa^{-1}\rho}^\infty\eta(r)drd\rho,
\end{equation}
\begin{equation}\label{sum2}
\sum_{j\leq-\log_2|x-y|_\kappa}\leq C\sum_{l=1}^n|x-y|^{2\kappa_l-1}_\kappa\int_{0}^\infty\frac{\omega_f(\rho)}{\rho^{2\kappa_l}}
\int_0^{|x-y|_\kappa^{-1}\rho}\eta(r)r^{2\kappa_l-1}drd\rho.
\end{equation}
Since $\eta$ is bounded and decays faster than any rational function, the right-hand-side of (\ref{sum1}) is controlled by
$$
\begin{aligned}
\int_{0}^{|x-y|_\kappa}&\frac{\omega_f(\rho)}{\rho}d\rho\int_0^\infty\eta(r)dr+C\int_{|x-y|_\kappa}^\infty\frac{\omega_f(\rho)}{\rho}
\int_{|x-y|_\kappa^{-1}\rho}^\infty\frac{dr}{r^2}d\rho\\
&\leq C\left(\int_{0}^{|x-y|_\kappa}\frac{\omega_f(\rho)}{\rho}d\rho
+|x-y|_\kappa\int_{|x-y|_\kappa}^\infty\frac{\omega_f(\rho)}{\rho^2}d\rho\right),
\end{aligned}
$$
and the right-hand-side of (\ref{sum2}) is controlled by
$$
\begin{aligned}
\sum_{l=1}^n&|x-y|_\kappa^{2\kappa_l-1}\left(\int_{0}^{|x-y|_\kappa}\frac{\omega_f(\rho)}{\rho^{2\kappa_l}}d\rho
\int_0^{{|x-y|_\kappa}^{-1}\rho}\eta(r)r^{2\kappa_l-1}dr\right.\\
&\left.\quad\quad\quad+\int_{|x-y|_\kappa}^\infty\frac{\omega_f(\rho)}{\rho^{2\kappa_l}}d\rho
\int_{0}^{\infty}\eta(r)r^{2\kappa_l-1}dr\right)\\
&\leq C\sum_{l=1}^n|x-y|_\kappa^{2\kappa_l-1}\left(\int_{0}^{|x-y|_\kappa}
\frac{\omega_f(\rho)}{\rho^{2\kappa_l}}\frac{\rho^{2\kappa_l-1}}{|x-y|_\kappa^{2\kappa_l-1}}d\rho
+\int_{|x-y|_\kappa}^\infty\frac{\omega_f(\rho)}{\rho^{2\kappa_l}}d\rho\right)\\
&\leq C\left(\int_{0}^{|x-y|_\kappa}\frac{\omega_f(\rho)}{\rho}d\rho
+|x-y|_\kappa\int_{|x-y|_\kappa}^\infty\frac{\omega_f(\rho)}{\rho^{2}}d\rho\right).
\end{aligned}
$$
By the assumptions on $\omega_f$, these are all finite integrals. Consequently, the series $\sum_{j\in\mathbb{Z}}[u_j(x)-u_j(y)]$ converges absolutely and locally uniformly on $\mathbb{R}^n\times\mathbb{R}^n$, to a continuous function of $(x,y)$. Set $y=0$; then by definition of the blocks $u_j$, the Fourier transform of the distribution $u-\sum_{j\in\mathbb{Z}}[u_j(x)-u_j(0)]$ is supported on zero, so there is a polynomial $Q$ such that
$$u=Q(x)+\sum_{j\in\mathbb{Z}}[u_j(x)-u_j(0)].$$
Hence, $u$ is represented by a continuous function of polynomial growth, and by absolute convergence of the series, the desired estimate (\ref{normcont}) on the modulus of continuity follows.

Finally, let us discuss when we can take $Q=0$. If either $m$ is supported away from zero or $f\in L^2(\mathbb{R}^n)$ (whence $\hat{f}\in L^2(\mathbb{R}^n)$), then obviously the series $\sum_{j\in\mathbb{Z}}\varphi_jm\hat{f}$ converges to $m\hat{f}=\hat{u}$ in $\mathcal{S}'(\mathbb{R}^n)$. So $\sum_{j\in\mathbb{Z}}u_j$ converges to $u$ in $\mathcal{S}'(\mathbb{R}^n)$, and this immediately legitimize us to take $Q=0$.
\end{proof}

Suppose now $u\in\mathcal{S}'(\mathbb{R}^n)$ solves a differential equation
\begin{equation}\label{globaleq}
Pu=\sum_{\alpha\in\mathcal{A},\beta\in\mathcal{B}}a_{\alpha\beta}D^{\alpha+\beta}u=\sum_{\beta\in\mathcal{B}}D^\beta f_\beta
\end{equation}
in the sense of distribution, where the symbol $p(\xi)$ of $P$ satisfies the general requirements (\ref{kappahom}) and (\ref{hypoell}), and the distributions $\{f_\beta\}_{\beta\in\mathcal{B}}$ are represented by continuous functions of polynomial growth. Taking Fourier transform, we obtain
$$p(\xi)\hat{u}(\xi)=\sum_{\beta\in\mathcal{B}}(i\xi)^\beta\hat{f}_\beta(\xi).$$
Consider the multiplier
$$m_{\alpha\beta}(\xi)=\frac{(i\xi)^{\alpha+\beta}}{p(\xi)},$$
where the index $\alpha$ is such that $(\alpha+\beta)\cdot\kappa=m$ for all $\beta\in\mathcal{B}$ (in particular, this is the case for $\alpha\in\mathcal{A}$). Let us verify that $m_{\alpha\beta}$ satisfies (\ref{MihHor}). In fact, using induction and the weighted arithmetic-geometric mean inequality, we obtian
$$|D^\gamma m(\xi)|\leq C|\xi|_\kappa^{-\gamma\cdot\kappa},$$
with $C=C(\alpha,\beta,\gamma,\kappa,\lambda,\Lambda)$. Set $v_\alpha=(\sum_{\beta\in\mathcal{B}}m_{\alpha\beta}\hat{f}_\beta)^\vee$; then obviously the Fourier transform of $(D^\alpha u-v_\alpha)$ is supported at the origin, so $(D^\alpha u-v_\alpha)$ is a polynomial. Applying the multiplier theorem \ref{Mult}, we obtain the following global estimate, which generalizes the results given by Simon \cite{Simon1} and Wang \cite{Wang}:
\begin{theorem}\label{Global}
Suppose $u\in\mathcal{S}'(\mathbb{R}^n)$ solves equation (\ref{globaleq}). Set
$$\omega_f(\rho):=\sup_{|x-y|_\kappa\leq \rho}\sum_{\beta\in\mathcal{B}}|f_\beta(x)-f_\beta(y)|.$$
If $\omega_f(\rho)/\rho\in L^1(0,1)$ and $\omega_f(\rho)/\rho^2\in L^1(1,+\infty)$, then for each $\alpha\in\mathbb{N}_0^n$ such that $(\alpha+\beta)\cdot\kappa=m\,\forall\beta\in\mathcal{B}$, $D^\alpha u$ is represented by a continuous function, and for some constant $C$ depending on $\lambda,\Lambda$ only, there is a polynomial $Q_\alpha$ such that
$$
\begin{aligned}
|[(D^\alpha u)&(x)-Q_\alpha(x)]-[(D^\alpha u)(y)-Q_\alpha(y)]|\\
&\leq C\left(\int_{0}^{|x-y|_\kappa}\frac{\omega_f(\rho)}{\rho}d\rho
+|x-y|_\kappa\int_{|x-y|_\kappa}^\infty\frac{\omega_f(\rho)}{\rho^2}d\rho\right).
\end{aligned}
$$
\end{theorem}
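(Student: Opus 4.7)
The plan is to reduce Theorem \ref{Global} to Theorem \ref{Mult}(B) by Fourier analysis, essentially formalizing the sketch given in the paragraph immediately preceding the statement. Taking Fourier transform in $\mathcal{S}'$ turns the equation into $p(\xi)\hat u(\xi)=\sum_{\beta\in\mathcal{B}}(i\xi)^\beta \hat f_\beta(\xi)$, and hypoellipticity gives $|p(\xi)|\geq\lambda|\xi|_\kappa^m>0$ on $\mathbb{R}^n\setminus\{0\}$, so on that set we may divide. For each fixed $\alpha$ with $(\alpha+\beta)\cdot\kappa=m$ for all $\beta\in\mathcal{B}$, introduce the multipliers
$$m_{\alpha\beta}(\xi)=\frac{(i\xi)^{\alpha+\beta}}{p(\xi)},$$
which are smooth on $\mathbb{R}^n\setminus\{0\}$ and, by (\ref{kappahom}), $\kappa$-homogeneous of degree $0$.

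The first substantive step is to verify the Mihlin--H\"ormander-type condition (\ref{MihHor}) for each $m_{\alpha\beta}$. Because $p$ is $\kappa$-homogeneous of degree $m$ and bounded below by $\lambda|\xi|_\kappa^m$, an induction on $|\gamma|$ using the Leibniz/quotient rule yields
$$|D^\gamma m_{\alpha\beta}(\xi)|\leq C_\gamma|\xi|_\kappa^{-\gamma\cdot\kappa},\qquad \xi\neq 0,$$
where at each step of the induction one distributes derivatives between the factor $(i\xi)^{\alpha+\beta}$ and the powers of $1/p$, and the weighted arithmetic--geometric mean inequality is used to estimate mixed terms of the form $\prod_l |\xi^l|^{a_l}$ against $|\xi|_\kappa^{\sum_l \kappa_l a_l}$. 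Integrating this pointwise bound over the $\kappa$-annulus $R\leq |\xi|_\kappa\leq 2R$, whose Lebesgue measure is comparable to $R^{|\kappa|}$, gives $A_\gamma<\infty$ independent of $R$.

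With the multiplier hypothesis in hand, apply Theorem \ref{Mult}(B) to each pair $(m_{\alpha\beta},f_\beta)$. Since the modulus of continuity of an individual $f_\beta$ is dominated by $\omega_f$, the Dini hypotheses on $\omega_f$ supply the required integrability, and the theorem produces a continuous representative $v_{\alpha\beta}:=(m_{\alpha\beta}\hat f_\beta)^\vee$ of polynomial growth together with a polynomial $Q_{\alpha\beta}$ for which
$$|[v_{\alpha\beta}(x)-Q_{\alpha\beta}(x)]-[v_{\alpha\beta}(y)-Q_{\alpha\beta}(y)]|\leq C\Bigl(\int_0^{|x-y|_\kappa}\tfrac{\omega_f(\rho)}{\rho}d\rho+|x-y|_\kappa\int_{|x-y|_\kappa}^\infty\tfrac{\omega_f(\rho)}{\rho^2}d\rho\Bigr).$$
Summing in $\beta\in\mathcal{B}$, put $v_\alpha=\sum_\beta v_{\alpha\beta}$ and $\tilde Q_\alpha=\sum_\beta Q_{\alpha\beta}$. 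Finally, by construction the Fourier transform of $D^\alpha u-v_\alpha$ is supported at the origin, hence $D^\alpha u-v_\alpha$ is a polynomial $R_\alpha$, and setting $Q_\alpha=\tilde Q_\alpha+R_\alpha$ absorbs this discrepancy. The stated bound for $D^\alpha u-Q_\alpha$ then follows from the triangle inequality.

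The only genuinely technical obstacle is the derivative estimate $|D^\gamma m_{\alpha\beta}|\leq C_\gamma |\xi|_\kappa^{-\gamma\cdot\kappa}$: one must track carefully that each differentiation with respect to $\xi^l$ lowers the $\kappa$-homogeneous degree by exactly $\kappa_l$, so that the combinatorial expansion of $D^\gamma(1/p)$ produces only terms of the claimed order despite the anisotropy. Everything else is bookkeeping: verifying the oscillation dominance, invoking Theorem \ref{Mult}(B) termwise, and identifying the distributional discrepancy with a polynomial via the support-at-origin principle.
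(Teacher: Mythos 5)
Your proposal is correct and follows essentially the same route as the paper: taking the Fourier transform, introducing the multipliers $m_{\alpha\beta}(\xi)=(i\xi)^{\alpha+\beta}/p(\xi)$, verifying the Mihlin--H\"{o}rmander condition \eqref{MihHor} via the pointwise bound $|D^\gamma m_{\alpha\beta}(\xi)|\leq C_\gamma|\xi|_\kappa^{-\gamma\cdot\kappa}$ (proved by induction and the weighted arithmetic--geometric mean inequality), and then applying Theorem \ref{Mult}(B) together with the observation that $D^\alpha u-v_\alpha$ has Fourier transform supported at the origin and is therefore a polynomial. The only cosmetic difference is that you apply the multiplier theorem termwise in $\beta$ and sum, whereas the paper forms $v_\alpha=(\sum_\beta m_{\alpha\beta}\hat f_\beta)^\vee$ in one step; these are the same argument.
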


\subsection{Localization and Lower-order Derivatives}
In \cite{Simon1}, Simon assumed a regularity structure for the indices. To be precise, he assumed that for any $u\in C^\mathcal{A}(\Omega)$ with $\Omega$ being a convex domain, a Taylor type expansion
$$u(x)=\sum_{\gamma\in\mathcal{A}'\setminus\mathcal{A}}\frac{x^\gamma}{\gamma!}D^\gamma u(y)+\sum_{\gamma\in\mathcal{A}}\frac{x^\gamma}{\gamma!}D^\gamma u(x_\gamma),$$
where $x,y\in\Omega$ and $x_\gamma\in[x,y]$. However, if the index sets $\mathcal{A},\mathcal{B}$ already allow the existence of a hypoelliptic operator of the form (\ref{Constprin}), then it seems that such an assumption is removable.

Consider the differential operator
\begin{equation}\label{LowerConst}
P+H=\sum_{\alpha\in\mathcal{A},\beta\in\mathcal{B}}a_{\alpha\beta}D^{\alpha+\beta}
+\sum_{\substack{\alpha,\beta\in\mathbb{N}_0^n: \\ \kappa\cdot(\alpha+\beta)<m}}a_{\alpha\beta}D^{\alpha+\beta},
\end{equation}
and the corresponding symbol
\begin{equation}\label{LowerConstSym}
p(\xi)+h(\xi)=\sum_{\alpha\in\mathcal{A},\beta\in\mathcal{B}}a_{\alpha\beta}(i\xi)^{\alpha+\beta}
+\sum_{\substack{\alpha,\beta\in\mathbb{N}_0^n: \\ \kappa\cdot(\alpha+\beta)<m}}a_{\alpha\beta}(i\xi)^{\alpha+\beta}.
\end{equation}
The coefficients $\{a_{\alpha\beta}\}$ are still constants, and the principal symbol $p(\xi)$ still satisfies the general assumptions (\ref{kappahom}), (\ref{hypoell}). Then a direct computation gives $|h(\xi)|\leq C|\xi|_\kappa^{m-1}$, so $h(\xi)/p(\xi)=O(|\xi|_\kappa^{-1})$ as $|\xi|_\kappa\to\infty$. Consequently, there is a constant $C=C(\{a_{\alpha\beta}\})$ such that $|p(\xi)+h(\xi)|\geq \lambda|\xi|_\kappa^m/2$ if $|\xi|_\kappa\geq C$.

Without loss of generality, suppose that $u\in\mathfrak{D}'(B^\kappa_R)$, where $B^\kappa_R$ is the open $\kappa$-ball of radius $R$ centered at $0$, solves
\begin{equation}\label{localeq}
(P+H)u=\sum_{\beta\in\mathcal{B}}D^\beta f_\beta,
\end{equation}
where the distributions $f_\beta\in\mathfrak{D}'(B^\kappa_R)$ are represented by continuous functions. Given $\theta\in(0,1)$, let $\varphi\in C_0^\infty(B^\kappa_R)$ be a non-negative bump function such that $\varphi=1$ on a neighbourhood $\Omega\Subset B^\kappa_R$ of $\bar B^\kappa_{\theta R}$. To localize, one may employ a usual \emph{parametrix argument} as follows.

Define $v\in \mathcal{S}'(\mathbb{R}^n)$ to be the Fourier inverse transform of
$$\frac{1-\phi(\xi)}{p(\xi)+h(\xi)}\sum_{\beta\in\mathcal{B}}\widehat{D^\beta(\varphi f_\beta)}(\xi)
=\frac{1-\phi(\xi)}{p(\xi)+h(\xi)}\sum_{\beta\in\mathcal{B}}(i\xi)^\beta\widehat{\varphi f_\beta}(\xi),$$
where $\phi\in C_0^\infty(\mathbb{R}^n)$ is a non-negative bump function such that $\phi(\xi)=1$ when $|\xi|_\kappa\leq 2C$, and $\phi(\xi)=0$ for $|\xi|_\kappa\geq 4C$. Note that for each $\beta\in\mathcal{B}$, $\varphi f_\beta\in C_c(\mathbb{R}^n)$, so the Fourier transform of $D^\beta(\varphi f_\beta)$ is real-analytic. For any given $\alpha,\beta\in\mathbb{N}_0^n$ such that $(\alpha+\beta)\cdot\kappa\leq m$, the multiplier
$$\frac{1-\phi(\xi)}{p(\xi)+h(\xi)}(i\xi)^{\alpha+\beta}$$
is supported away from the origin, and satisfies the assumption of theorem \ref{Mult}. Hence for any $\alpha\in\mathcal{A}'$,
$$\|D^\alpha v\|_{p;\mathbb{R}^n}\leq C\sum_{\beta\in\mathcal{B}}\|f_\beta\|_{p;\mathbb{R}^n},$$
and for all $x,y\in\mathbb{R}^n$, with
$$\omega(\rho):=\sup_{|x-y|_\kappa\leq\rho}\sum_{\beta\in\mathcal{B}}|\varphi f_\beta(x)-\varphi f_\beta(y)|,$$
we have
$$
|D^\alpha v(x)-D^\alpha v(y)|\leq C\left(\int_{0}^{|x-y|_\kappa}\frac{\omega(\rho)}{\rho}d\rho
+|x-y|_\kappa\int_{|x-y|_\kappa}^\infty\frac{\omega(\rho)}{\rho^2}d\rho\right).
$$
Note that these estimates do not rely on any regularity-structural assumption on the index set $\mathcal{A}$.

Since $v\in L^2(\mathbb{R}^n)$ solves
$$(P+H)v=\sum_{\beta\in\mathcal{B}}D^\beta\left[\varphi f_\beta-(\varphi f_\beta)*\check{\phi}\right],$$
it follows that $w=u-v$ satisfies
$$(P+H)w=\sum_{\beta\in\mathcal{B}}D^\beta[(\varphi f_\beta)*\check{\phi}]$$
on $\Omega$, and the right-hand-side is the restriction of a Schwartz function on $\Omega$. We now present a Campanato type lemma, which is also used when deriving the results for equations with variable coefficients:

\begin{lemma}\label{Campanato1}
Suppose $w\in\mathfrak{D}'(B^\kappa_R)$ satisfies
$$(P+H)w=\sum_{\alpha\in\mathcal{A},\beta\in\mathcal{B}}a_{\alpha\beta}D^{\alpha+\beta}w=f,$$
where the operator $P$ still has constant coefficients and still satisfies the hypoellipticity condition (\ref{hypoell}), and $f\in C^\infty(B^\kappa_R)$. Then $w\in C^\infty(B^\kappa_R)$. Furthermore, for any $r\in(0,R/2]$ and any $p>0$, with $(w)_{0,r}$ being the mean of $w$ on $B^\kappa_r$, there are natural numbers $k=k(n,\kappa)$, $K=K(n,\kappa,m)$ such that for any $b\in\mathbb{C}$,
$$
\begin{aligned}
\|w-&(w)_{0,r}\|_{p;B^\kappa_r}\\
&\leq C\left(\frac{r}{R}\right)^{|\kappa|/p+\min\kappa_l}\|w-b\|_{p;B^\kappa_R}
+C(1+R^K)\sum_{j=0}^k\|D^jf\|_{1;B^\kappa_R},
\end{aligned}
$$
where the constant $C=C(p,\lambda,\Lambda,\kappa)$, provided that the right-hand-side is well-defined.
\end{lemma}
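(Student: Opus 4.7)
The plan proceeds in three stages: smoothness of $w$, reduction of the Campanato-type estimate to a pointwise first-derivative bound, and derivation of that bound via the parametrix construction from the paragraph preceding the lemma. Smoothness is immediate: $|p(\xi)|\geq\lambda|\xi|^m_\kappa$ together with $|h(\xi)|=O(|\xi|^{m-1}_\kappa)$ yields $|p(\xi)+h(\xi)|\geq\lambda|\xi|_\kappa^m/2$ for $|\xi|_\kappa$ large, which is the standard non-vanishing criterion for hypoellipticity of a constant-coefficient operator, so $w\in C^\infty(B^\kappa_R)$. The shift $w\mapsto w-b$ changes the right-hand side of the equation only by the constant $-a_{00}b$ coming from a possible zero-order term in $H$; this contributes a term of size $|b|R^{|\kappa|}$ that is tracked through the $f$-bookkeeping, so up to that accounting it suffices to treat $b=0$.

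Next I would reduce the integral estimate to a pointwise interior bound on each $D_lw$. For $x,y\in B^\kappa_r$ the anisotropic coordinate inequality $|x^l-y^l|\leq Cr^{\kappa_l}$ (valid since $|x^l|\leq|x|_\kappa^{\kappa_l}$) gives
$$|w(x)-w(y)|\leq C\sum_{l=1}^n r^{\kappa_l}\|D_lw\|_{\infty;B^\kappa_r},$$
and integrating against $|B^\kappa_r|\sim r^{|\kappa|}$ yields $\|w-(w)_{0,r}\|_{p;B^\kappa_r}\leq Cr^{|\kappa|/p}\sum_lr^{\kappa_l}\|D_lw\|_{\infty;B^\kappa_r}$. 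Because $r\leq R/2$, the dominant factor among $(r/R)^{\kappa_l}$ is $(r/R)^{\min\kappa_l}$, so the lemma reduces to showing for each direction $l$
$$\|D_lw\|_{\infty;B^\kappa_{R/2}}\leq CR^{-|\kappa|/p-\kappa_l}\|w\|_{p;B^\kappa_R}+CR^{K-\kappa_l}\sum_{j\leq k}\|D^jf\|_{1;B^\kappa_R}.$$

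To prove this interior bound I would repeat the parametrix construction already used in the text. Let $\chi\in C_c^\infty(B^\kappa_R)$ equal $1$ on $B^\kappa_{3R/4}$ and let $\phi$ be the low-frequency cutoff from before, and set
$$v:=\left[\frac{1-\phi(\xi)}{p(\xi)+h(\xi)}\widehat{\chi f}(\xi)\right]^\vee.$$
Writing $\widehat{\chi f}=(i\xi)^{-\gamma}\widehat{D^\gamma(\chi f)}$ for a multi-index $\gamma$ chosen large enough that the multiplier $(1-\phi)(p+h)^{-1}(i\xi)^{e_l-\gamma}$ has an $L^1$-integrable Fourier inverse, Theorem \ref{Mult} yields $\|D_lv\|_\infty\leq C\|D^\gamma(\chi f)\|_1$; Leibniz with $\|D^{\gamma'}\chi\|_\infty\leq CR^{-\gamma'\cdot\kappa}$ then gives $\|D^\gamma(\chi f)\|_1\leq C(1+R^K)\sum_{j\leq k}\|D^jf\|_{1;B^\kappa_R}$, producing the stated $(1+R^K)$ prefactor and fixing the integers $k,K$ in terms of $n,\kappa,m$. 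The residual $u:=w-v$ satisfies $(P+H)u=\check\phi*(\chi f)$ on $B^\kappa_{3R/4}$ with Schwartz right-hand side; standard interior regularity for the hypoelliptic operator $P+H$ (obtained by a cutoff-and-multiplier argument using Theorem \ref{Mult}, rescaled by the anisotropic dilation $T_R$ to reduce to unit scale) bounds $\|D_lu\|_{\infty;B^\kappa_{R/2}}$ by $CR^{-|\kappa|/p-\kappa_l}\|u\|_{p;B^\kappa_{3R/4}}$ plus a tail of the same $f$-form as the $v$-bound, and combining the two estimates finishes the argument.

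The main obstacle will be the $R$-bookkeeping: carefully tracking how the anisotropic dilation $T_R$, the Leibniz derivatives landing on $\chi$, and the order of the parametrix kernel each contribute to the integers $k,K$ and to the $(1+R^K)$ prefactor, while keeping a clean split between the $\|w\|_p$ and $\|D^jf\|_1$ contributions. A secondary delicate point is the absorption of the constant-$b$ contribution when $H$ has a non-trivial zero-order coefficient, which requires comparing $|b|\,|B^\kappa_R|^{1-1/p}$ with $\|w-b\|_{p;B^\kappa_R}$ at the correct scale.
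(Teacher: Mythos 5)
The structural difference from the paper is that you split $w=v+u$ via a global parametrix (which is what the paper does in the \emph{next} lemma, Lemma~\ref{Campanato2}, not here), whereas the paper's proof of Lemma~\ref{Campanato1} works directly on $\psi w$ via the commutator identity $P(\psi w)=\psi f+[P,\psi]w$ and a Plancherel-based induction. But that difference is not the real concern: the crux of your argument is the claim that ``standard interior regularity \dots obtained by a cutoff-and-multiplier argument using Theorem~\ref{Mult}'' yields
$$\|D_lu\|_{\infty;B^\kappa_{R/2}}\leq CR^{-|\kappa|/p-\kappa_l}\|u\|_{p;B^\kappa_{3R/4}}+(\text{$f$-tail}),$$
and this does not follow from Theorem~\ref{Mult}. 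That theorem provides $L^p\to L^p$ bounds for $1<p<\infty$, a weak $(1,1)$ bound, and a Dini-continuity estimate; none of these produces an interior $L^p\to C^1$ bound, and the lemma explicitly allows any $p>0$, including $p<1$ where $\|\cdot\|_p$ is only a quasi-norm. The passage from a local $L^p$ control of $u$ (with small $p$) to pointwise control of $D_lu$ is exactly the content that the paper supplies via the chain: Plancherel gives $\|w\|_{k,2;\Omega_{3/4}}\lesssim\|w\|_{0,2;\Omega_1}+\|f\|_{k|\kappa|,2;\Omega_1}$; Rellich--Kondrachov compactness on the smooth domain $\Omega_1$ together with a blow-up argument yields the interpolation inequality $\|u\|_{j,s}\leq\varepsilon\|u\|_{j+1,s}+C_{\varepsilon,p,j,s}\|u\|_p$ valid for all $p>0$; and an absorption lemma closes the estimate. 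Without this machinery (or an equivalent substitute), your residual step is a gap, and since $v$ only absorbs the $f$-contribution, the gap is in the part that carries the $\|w-b\|_{p;B^\kappa_R}$ term on the right-hand side --- i.e.\ the main term.

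Two smaller points. First, the operator in this lemma is effectively just the constant-coefficient $\kappa$-homogeneous $P$: the displayed sum ranges only over $\mathcal{A},\mathcal{B}$, and the paper's proof never invokes lower-order terms, so your bookkeeping around a possible zero-order coefficient $a_{00}$ of $H$ and the resulting $|b|R^{|\kappa|}$ contribution is a complication the lemma does not contain (indeed the stated bound has no $|b|$ term, so a genuine zero-order term would break it; the paper's one-line ``$w-b$ is still a solution'' relies on $m>0$ and $\kappa_l>0$ forcing every index $\alpha+\beta$ to be nonzero). Second, your reduction from the oscillation of $w$ on $B^\kappa_r$ to $\sum_l r^{\kappa_l}\|D_lw\|_{\infty}$ uses a segment estimate, but $B^\kappa_r$ is not convex when some $\kappa_l>2$; the paper side-steps this by taking the $C^1$ norm on the larger ball $B^\kappa_{1/2}$, and you should do the same (the segment joining two points of $B^\kappa_r$ stays in $B^\kappa_{Cr}$ for a dimensional constant $C$, so the fix is harmless but needs to be stated).
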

\begin{proof}
That $w\in C^\infty(B^\kappa_R)$ follows immediately from hypoellipticity.

For the moment, suppose $R=1$. We construct a family $\{\Omega_r\}_{r\in[3/4,1]}$ of star-shaped open sets, where each $\Omega_r$ is the dilation $T_r\Omega_1$, such that $\partial\Omega_1$ is smooth, and $B^\kappa_{1/2}\Subset\Omega_{3/4}\Subset\Omega_1\Subset B^\kappa_1$. For any bump function $\psi\in C_0^\infty(\Omega_{1})$, a direct computation with the aid of Leibniz formula gives
$$
\begin{aligned}
P(\psi w)&=\psi Pw+[P,\psi]w\\
&=\psi f+\sum_{\substack{\gamma\cdot\kappa<m, \\ (\gamma+\delta)\cdot\kappa<m}}c_{\gamma\delta}D^\gamma(D^\delta\psi w),
\end{aligned}
$$
where the constants $\{c_{\gamma\delta}\}$ depend $\{a_{\alpha\beta}\}$ only. Taking Fourier transform on both sides,
$$p(\xi)\widehat{\psi w}(\xi)=\widehat{\psi f}(\xi)+\sum_{\substack{\gamma\cdot\kappa<m, \\ (\gamma+\delta)\cdot\kappa<m}}c_{\gamma\delta}(i\xi)^\gamma\widehat{D^\delta\psi w}(\xi).$$
Multiplying both sides by $(1+|\xi|_\kappa^j)/(1+|\xi|_\kappa^m)$, using the weighted arithmetic-geometric mean inequality, the above equality gives
$$(1+|\xi|_\kappa^j)|\widehat{\psi w}(\xi)|\leq C(1+|\xi|_\kappa^{j-1})\left[|\widehat{\psi f}(\xi)|
+\sum_{\delta\cdot\kappa\leq m}|\widehat{D^\delta\psi w}(\xi)|\right],$$
where the constant $C=C(j,\lambda,\Lambda)$. Since $\psi$ is arbitrary, we might as well take $D^\delta\psi$ in place of $\psi$ and use induction on $j$, to obtain
\begin{equation}\label{temp1}
(1+|\xi|_\kappa^j)|\widehat{\psi w}(\xi)|
\leq C\left[(1+|\xi|_\kappa^{j-1})\sum_{\gamma\cdot\kappa\leq(j-1)m}|\widehat{D^\gamma\psi f}(\xi)|
+\sum_{\delta\cdot\kappa\leq jm}|\widehat{D^\delta\psi w}(\xi)|\right],
\end{equation}
where the constant $C=C(j,\lambda,\Lambda)$. Let $k$ be the smallest integer to allow the Sobolev embedding $W^{k,2}(\mathbb{R}^n)\hookrightarrow C^1(\mathbb{R}^n)$. Taking $j=k|\kappa|$ in (\ref{temp1}), we obtain
$$
|\xi|^k|\widehat{\psi w}(\xi)|\leq C\left[(1+|\xi|^{k|\kappa|})\sum_{\gamma\cdot\kappa\leq(k|\kappa|-1)m}|\widehat{D^\gamma\psi f}(\xi)|
+\sum_{\delta\cdot\kappa\leq k|\kappa|m}|\widehat{D^\delta\psi w}(\xi)|\right],
$$
where the constant $C=C(\lambda,\Lambda)$. Now let $\psi$ be such that $\psi=1$ on $\Omega_{1}$. Squaring and using the Plancherel theorem, this gives
\begin{equation}\label{temp4}
\|w\|_{k,2;\Omega_{3/4}}\leq C\left(\|w\|_{0,2;\Omega_{1}}+\|f\|_{k|\kappa|,2;\Omega_1}\right).
\end{equation}
Since $\partial\Omega_1$ is smooth, the Rellich¨CKondrachov theorem implies the compactness of the embedding $W^{j+1,s}(\Omega_1)\hookrightarrow W^{j,s}(\Omega_1)$ for all $s\geq1$. A standard blow-up argument then gives, for all $\varepsilon>0$, $p>0$ \footnote{If $p<1$ then $\|\cdot\|_p$ is not a norm, but $\|u\|_p\to0$ implies $u\to0$ in measure, so the usual blow-up argument is still valid.} and $u\in W^{j+1,s}(\Omega_1)$, an interpolation inequality
$$\|u\|_{j,s;\Omega_1}\leq\varepsilon\|u\|_{j+1,s;\Omega_1}+C_{\varepsilon,p,j,s}\|u\|_{p;\Omega_1},$$
where $j\geq k-1$. So (\ref{temp4}) is reduced to
$$\|w\|_{k,2;\Omega_{3/4}}\leq\varepsilon\|w\|_{k,2;\Omega_{1}}
+C_{\varepsilon,p,k}\|w\|_{p;\Omega_1}+C_{k}\sum_{j=0}^{2k|\kappa|}\|D^jf\|_{1;\Omega_1}.$$
Scaling and using a standard absorbtion argument (see for example the absorbtion lemma presented in \cite{Simon1}, section 4), we finally obtain
$$\|w\|_{C^{1}(B^{\kappa}_{1/2})}\leq C\|w\|_{k,2;B^{\kappa}_{1/2}}\leq C\left(\|w\|_{p;B^{\kappa}_{1}}+\sum_{j=0}^{2k|\kappa|}\|D^jf\|_{1;B^{\kappa}_{1}}\right),$$
where the constant $C=C(p,\lambda,\Lambda,k,|\kappa|)$.

Now fix $\theta\in(0,1/2]$. For any $y\in B^\kappa_{\theta}$, compute
\begin{equation}\label{temp2}
\begin{aligned}
\left(\dashint_{B^\kappa_\theta}|w(x)-(w)_{0,\theta}|^pdx\right)^{1/p}&\leq\text{osc}_{B^\kappa_\theta}w\\
&\leq\sup_{x,y\in B^\kappa_{\theta}}\|w\|_{C^1(B^\kappa_{1/2})}|x-y|\\
&\leq C\theta^{\min\kappa_l}\left(\|w\|_{p;B^{\kappa}_{1}}+\sum_{j=0}^{2k|\kappa|}\|D^jf\|_{1;B^{\kappa}_{1}}\right).
\end{aligned}
\end{equation}

To treat the general case $w\in C^\infty(B^\kappa_R)$ with $R\neq1$, we just have to consider the scaled function $w_R(x):=w(T_Rx)$, which is defined on $B^\kappa_1$, and satisfies $Pw_R=R^mf_R$ on $B^\kappa_R$. Replacing $w$ by $w_R$ in (\ref{temp2}), and setting $\theta=r/R$ with $r\leq R/2$, we obtain
$$
\begin{aligned}
&\left(\int_{B^\kappa_r}|w(x)-(w)_{0,r}|^pdx\right)^{1/p}\\
&\leq C\left(\frac{r}{R}\right)^{|\kappa|/p+\min\kappa_l}
\|w\|_{p;B^\kappa_R}+C(1+R^{km|\kappa|^2})\sum_{j=0}^{2k|\kappa|}\int_{B^\kappa_R}|D^jf|.
\end{aligned}
$$
Noticing $w-b$ is still a solution, we see that this is equivalent to the desired result.
\end{proof}

From the above reasonings, we immediately obtain the following theorem:
\begin{theorem}\label{Local}
Suppose that $u\in \mathfrak{D}'(B^\kappa_R)$ solves
\begin{equation}\label{localeq}
(P+H)u
=\sum_{\alpha\in\mathcal{A},\beta\in\mathcal{B}}a_{\alpha\beta}D^{\alpha+\beta}u
+\sum_{\substack{\alpha,\beta\in\mathbb{N}_0^n: \\ \kappa\cdot(\alpha+\beta)<m}}a_{\alpha\beta}D^{\alpha+\beta}u
=\sum_{\beta\in\mathcal{B}}D^\beta f_\beta,
\end{equation}
in the sense of distribution, where the symbol $p(\xi)$ of $P$ satisfies the general requirements (\ref{kappahom}) and (\ref{hypoell}).

(A) If the distributions $f_\beta\in L^p(B^\kappa_R)$ with $p\in (1,\infty)$, then $u\in W^{\mathcal{A},p}_{\mathrm{loc}}(B^\kappa_R)$, and for any $\theta\in(0,1)$ and any $\alpha\in\mathcal{A}'$,
$$\|D^\alpha u\|_{p;B^\kappa_{\theta R}}\leq C\left(\|u\|_{p;B^\kappa_R}+\sum_{\beta\in\mathcal{B}}\|f_\beta\|_{p;B^\kappa_R}\right),$$
where $C=C(p,\lambda,\Lambda,\theta,B^\kappa_R)$.

(B) If the distributions $f_\beta$ are represented by continuous functions and the modulus of continuity satisfy the conditions in theorem \ref{Global} in $B^\kappa_R$, then for any $\alpha\in\mathcal{A}'$, $D^\alpha u\in C(B^\kappa_R)$, and for any $\theta\in(0,1)$ and any $x,y\in B^\kappa_{\theta R}$,
$$
\begin{aligned}
|D^\alpha u(x)&-D^\alpha u(y)|\\
&\leq C|x-y|\left(\sup_{B^\kappa_R}|u|+\sum_{\beta\in\mathcal{B}}\sup_{B^\kappa_R}|f_\beta|\right)\\
&\quad+C\left(\int_{0}^{|x-y|_\kappa}\frac{\omega_f(\rho)}{\rho}d\rho
+|x-y|_\kappa\int_{|x-y|_\kappa}^\infty\frac{\omega_f(\rho)}{\rho^2}d\rho\right),
\end{aligned}
$$
where
$$\omega_f(\rho)=\sup_{|x-y|_\kappa\leq\rho}\sum_{\beta\in\mathcal{B}}|f_\beta(x)-f_\beta(y)|,$$
and $C=C(\lambda,\Lambda,\theta,B^\kappa_R)$.
\end{theorem}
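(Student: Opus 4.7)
The plan is to combine the parametrix decomposition $u = v + w$ constructed in the paragraphs immediately preceding the theorem with the two halves of Theorem \ref{Mult} and with the interior regularity argument from Lemma \ref{Campanato1}. All the preliminary work---cutting off the $f_\beta$ by $\varphi$, inverting the high-frequency part of $p+h$ to define $v$, and recording that $w = u - v$ solves $(P+H)w = \sum_\beta D^\beta[(\varphi f_\beta)\ast \check\phi]$ on $\Omega$ with a Schwartz right-hand side---is already in place above the statement, so what remains is to run two parallel arguments controlling $D^\alpha v$ and $D^\alpha w$ separately, then add the results.

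For part (A) I would fix $\alpha \in \mathcal{A}'$ and apply Theorem \ref{Mult}(A) to the multipliers $(1-\phi(\xi))(i\xi)^{\alpha+\beta}/(p(\xi)+h(\xi))$, which are smooth, supported away from the origin, and satisfy \eqref{MihHor} since $|h/p| = O(|\xi|_\kappa^{-1})$ and $(\alpha+\beta)\cdot\kappa \leq m$; this yields $\|D^\alpha v\|_{p;\mathbb{R}^n} \leq C \sum_\beta \|f_\beta\|_{p;B^\kappa_R}$. For $w$, hypoellipticity of $P+H$ and the smoothness of its right-hand side force $w \in C^\infty(\Omega)$, and rerunning the proof of Lemma \ref{Campanato1} with the $L^2$-Plancherel step replaced by Theorem \ref{Mult}(A) applied to the same high-frequency cutoffs of $(i\xi)^{\alpha+\beta}/(p+h)$ produces an interior estimate $\|D^\alpha w\|_{p;B^\kappa_{\theta R}} \leq C(\|w\|_{p;\Omega} + \text{finitely many } L^1\text{-norms of derivatives of the Schwartz RHS})$. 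Since $\|w\|_p \leq \|u\|_p + \|v\|_p$ and the Schwartz-RHS norms are dominated by $\sum_\beta \|f_\beta\|_{p;B^\kappa_R}$ via Young's inequality with the kernel $\check\phi$, the two estimates add to the claim of (A).

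For part (B) I would argue similarly but through part (B) of Theorem \ref{Mult}: the modulus of continuity of the compactly supported $\varphi f_\beta$ is bounded by $\omega_{f_\beta}(\rho) + C\sup|f_\beta|\,\omega_\varphi(\rho)$, so Theorem \ref{Mult}(B) (with $Q_\alpha = 0$ thanks to the compact support and $L^p$-membership of $\varphi f_\beta$) supplies the Dini-type integral bound on $|D^\alpha v(x) - D^\alpha v(y)|$, with the extra contribution from $\omega_\varphi$ absorbed into a Lipschitz-in-$|x-y|$ term. For $w$, the same Fourier-localization-plus-Sobolev-embedding inside the proof of Lemma \ref{Campanato1} produces $\|w\|_{C^1(B^\kappa_{\theta R})} \leq C(\|w\|_{p;B^\kappa_R} + \text{seminorms of the Schwartz RHS}) \leq C(\sup|u| + \sum_\beta \sup|f_\beta|)$, and an anisotropic mean value inequality $|D^\alpha w(x) - D^\alpha w(y)| \leq \|w\|_{C^1}\sum_l |x^l - y^l|$ then supplies the linear-in-$|x-y|$ term appearing in the stated conclusion. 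Summing the $v$ and $w$ bounds finishes part (B).

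The step I expect to be most delicate is the $L^2 \to L^p$ upgrade of the Fourier-analytic interior regularity argument inside Lemma \ref{Campanato1}: one must verify that, after localization by $\psi$, the identity $p(\xi)\widehat{\psi w} = \widehat{\psi f} + \sum c_{\gamma\delta}(i\xi)^\gamma \widehat{D^\delta \psi w}$ can be iterated in $j$ and then inverted in $L^p$ using only Theorem \ref{Mult}(A) applied to high-frequency cutoffs of $(i\xi)^\gamma/(p+h)$, with no uncontrolled dependence on $|\mathcal{A}'|$ or on $\alpha$. Once this is checked, the rest is routine bookkeeping of how the constants depend on $\theta$ and $R$, since all the analytic ingredients (the multiplier theorem, hypoelliptic smoothness, the Campanato lemma, and the anisotropic mean value inequality) are already available.
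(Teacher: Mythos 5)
Your decomposition matches the paper's: the theorem is stated as a direct consequence of the parametrix $u=v+w$ set up just above it, with $D^\alpha v$ controlled by Theorem~\ref{Mult} and $D^\alpha w$ controlled by the interior regularity inside the proof of Lemma~\ref{Campanato1}. However, two points need repair.

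First, the ``$L^2\to L^p$ upgrade'' you flag as delicate is actually not needed and, as you describe it, would be circular: replacing Plancherel by Theorem~\ref{Mult}(A) in the identity $(1+|\xi|_\kappa^j)|\widehat{\psi w}|\leq\cdots$ does not make sense pointwise, and trying to obtain an $L^p$ interior Sobolev estimate for $w$ that way is essentially the same as the assertion in part (A). The paper's route avoids this: on $\Omega$ the remainder $w$ is smooth with Schwartz right-hand side, so one stays in $L^2$, iterates the Fourier-localization estimate (\ref{temp1}) in $j$ to get $\|\psi w\|_{W^{j,2}}$ for arbitrarily high $j$, embeds into $C^N$, and then, since $B^\kappa_{\theta R}$ has finite measure, uniform bounds on $D^\alpha w$ trivially dominate all $L^p$ norms of $D^\alpha w$. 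The only place $L^p$ multiplier theory is used is for $v$, exactly as you do.

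Second, and this is a genuine slip, in part (B) the inequality $|D^\alpha w(x)-D^\alpha w(y)|\leq\|w\|_{C^1}\sum_l|x^l-y^l|$ is false for nonzero $\alpha$: the $C^1$ norm of $w$ controls only first-order oscillation. You need $\sup_{B^\kappa_{\theta R}}|D^\alpha w|$ and $\sup_{B^\kappa_{\theta R}}|\nabla D^\alpha w|$ for every $\alpha\in\mathcal{A}'$, i.e.\ $\|w\|_{C^{N}}$ for some $N$ large enough to dominate $\max_{\alpha\in\mathcal{A}'}(|\alpha|+1)$. As noted above, this is available by iterating (\ref{temp1}) to a sufficiently large $j$; but the $C^1$ bound that Lemma~\ref{Campanato1}'s proof records for $w$ is not itself enough, and your write-up needs to say explicitly that the iteration is carried to higher order. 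Once that is corrected, the rest of your accounting (the $L^p$ bound on $v$ through $0\in\mathcal{A}'$, the split of the modulus of continuity of $\varphi f_\beta$ into $\omega_{f_\beta}$ plus a smooth correction absorbed into the Lipschitz term, and Young's inequality for the Schwartz right-hand side) goes through.
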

As a corollary, we also obtain the following structural theorem for $W^{\mathcal{A},p}$ with $p>1$ and $C^\mathcal{A}$:
\begin{corollary}\label{Interpolation}
$u\in W^{\mathcal{A},p}_\mathrm{loc}(B^\kappa_R)$ implies $D^{\alpha'} u\in L^p_\mathrm{loc}(B^\kappa_R)$ for all $\alpha'\in\mathcal{A}'$, and $u\in C^{\mathcal{A}}(B^\kappa_R)$ implies $D^{\alpha'} u\in C(B^\kappa_R)$ for all $\alpha'\in\mathcal{A}'$.
\end{corollary}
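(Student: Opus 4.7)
Proof plan. The strategy is to regard $u$ as a distributional solution of
$$Pu=\sum_{\beta\in\mathcal{B}}D^\beta f_\beta,\qquad f_\beta:=\sum_{\alpha\in\mathcal{A}}a_{\alpha\beta}D^\alpha u,$$
with $P=\sum_{\alpha,\beta}a_{\alpha\beta}D^{\alpha+\beta}$ any fixed constant-coefficient hypoelliptic operator of the form (\ref{Constprin}) compatible with $\mathcal{A},\mathcal{B}$ (which is the standing admissibility assumption on the index sets), and then to apply Theorem \ref{Local}.

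The $W^{\mathcal{A},p}$ statement is immediate: $D^\alpha u\in L^p_{\mathrm{loc}}$ for $\alpha\in\mathcal{A}$ forces $f_\beta\in L^p_{\mathrm{loc}}$, and Theorem \ref{Local}(A) (applied with $H\equiv 0$) yields $D^{\alpha'}u\in L^p_{\mathrm{loc}}(B^\kappa_R)$ for every $\alpha'\in\mathcal{A}'$.

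For the $C^\mathcal{A}$ statement, I would split into two cases. When $\alpha'\in\mathcal{A}$, continuity of $D^{\alpha'}u$ is the hypothesis. When $\alpha'\in\mathcal{A}'\setminus\mathcal{A}$, I would first note that the relation $(\alpha+\beta)\cdot\kappa=m$ holding uniformly on $\mathcal{A}\times\mathcal{B}$ makes $\beta\cdot\kappa$ constant on $\mathcal{B}$, and the $\mathcal{B}$-completeness of $\mathcal{A}$ then forces the strict inequality $(\alpha'+\beta)\cdot\kappa<m$ for every $\beta\in\mathcal{B}$. I would then revisit the parametrix decomposition $u=v+w$ constructed in the paragraphs preceding Theorem \ref{Local}: $w$ is smooth on a subdomain by hypoellipticity (Lemma \ref{Campanato1}), so $D^{\alpha'}w$ is continuous there; while
$$D^{\alpha'}v = \sum_{\beta\in\mathcal{B}}\Bigl(\frac{(1-\phi(\xi))(i\xi)^{\alpha'+\beta}}{p(\xi)+h(\xi)}\,\widehat{\varphi f_\beta}(\xi)\Bigr)^{\!\vee}.$$
Each multiplier is supported away from $\xi=0$ and of anisotropic size $O(|\xi|_\kappa^{(\alpha'+\beta)\cdot\kappa-m})$ with a strictly negative exponent; the dyadic pieces $\check m_{\alpha'\beta,j}$ appearing in the proof of Theorem \ref{Mult} therefore inherit an extra geometric factor $2^{j((\alpha'+\beta)\cdot\kappa-m)}$, which makes $\sum_j\check m_{\alpha'\beta,j}$ converge to a bounded continuous kernel. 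Convolving against the compactly supported continuous datum $\varphi f_\beta$ then produces a continuous function, giving $D^{\alpha'}v\in C$ and hence $D^{\alpha'}u\in C$ on the subdomain; covering $B^\kappa_R$ by such subdomains finishes the proof.

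The main obstacle is this last upgrade: while Theorem \ref{Mult} as stated provides only $L^p\to L^p$ boundedness, the strictly negative anisotropic order of the multipliers corresponding to $\alpha'\in\mathcal{A}'\setminus\mathcal{A}$ has to be exploited to promote this to an $L^\infty\to C$ smoothing property. The technical verification is essentially a quantitative tracking of the extra $j$-decay in the $\eta(r)$-estimate from the proof of Theorem \ref{Mult}, analogous to the classical fact that Riesz potentials of strictly positive order map $L^\infty$ into continuous functions; it avoids any Dini-type hypothesis on $f_\beta$, which would otherwise be needed for Theorem \ref{Local}(B) and is unavailable under the mere continuity of $D^\alpha u$ for $\alpha\in\mathcal{A}$.
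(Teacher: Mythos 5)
The paper states this corollary as an immediate consequence of Theorem \ref{Local} without spelling out the details, so your task is effectively to reconstruct the implicit argument. Your overall plan --- set $f_\beta:=\sum_{\alpha\in\mathcal{A}}a_{\alpha\beta}D^\alpha u$ for a fixed admissible constant-coefficient $P$ and invoke the parametrix machinery behind Theorem \ref{Local} --- is the right one, and you have correctly identified the genuine subtlety: the $C^\mathcal{A}$ half cannot be read off directly from Theorem \ref{Local}(B), since (B) requires a Dini modulus for $f_\beta$, which mere continuity of $D^\alpha u$ does not supply. Your remedy --- observe that $\beta\cdot\kappa$ is constant on $\mathcal{B}$, that $\mathcal{B}$-completeness then forces $(\alpha'+\beta)\cdot\kappa<m$ strictly for $\alpha'\in\mathcal{A}'\setminus\mathcal{A}$, and that this negative anisotropic order of the multiplier makes the parametrix piece $D^{\alpha'}v$ automatically continuous with only $L^\infty$ control on $\varphi f_\beta$ --- is the mechanism the paper leans on (it is the same observation used in the sketch of Theorem \ref{Sobolev}), so you are on the intended route.

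One technical claim needs correction. You assert that the extra factor $2^{j\sigma}$, $\sigma:=(\alpha'+\beta)\cdot\kappa-m\le -1$, makes $\sum_j\check m_{\alpha'\beta,j}$ converge to a \emph{bounded} continuous kernel. This is not right in general: from $\check m_j(x)=2^{j|\kappa|}\widetilde m_j(T_2^j x)$ with $\|\widetilde m_j\|_\infty\lesssim 2^{j\sigma}$ one gets $\|\check m_j\|_\infty\lesssim 2^{j(|\kappa|+\sigma)}$, and whenever $-|\kappa|<\sigma<0$ these sup-norms grow geometrically, so the kernel remains unbounded and in fact singular at the origin. What you actually have, and what suffices, is $\|\check m_j\|_{L^1(\mathbb{R}^n)}=\|\widetilde m_j\|_{L^1(\mathbb{R}^n)}\lesssim 2^{j\sigma}$, so $\sum_{j\ge j_0}\check m_j$ converges in $L^1(\mathbb{R}^n)$; then $D^{\alpha'}v=\sum_\beta\check m_{\alpha'\beta}*(\varphi f_\beta)$ is uniformly continuous because translation is continuous on $L^1$ and $\varphi f_\beta\in L^\infty$. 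Replacing ``bounded continuous kernel'' by ``$L^1$ kernel'' closes the gap, and the rest of your argument (smoothness of $w$ on $\Omega$ by hypoellipticity, then covering) goes through as written.
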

A more delicate reasoning will give the anisotropic Sobolev-type theorem \ref{Sobolev}, whose proof is a delicate modification of the multiplier theorem \ref{Mult} and is sketched as follows. If $\alpha'\in\mathcal{A}'$, then the dyadic decomposition corresponding to the multiplier
$$m(\xi):=\frac{1-\phi(\xi)}{p(\xi)}\sum_{\beta\in\mathcal{B}}(i\xi)^{\alpha'+\beta}$$
(where $\phi(\xi)=1$ for $|\xi|_\kappa\leq4$) satisfies $m_j=0$ for $j\leq0$. Noting $(\alpha'+\beta)\cdot\kappa<m$, a direct computation gives
$$\|\check{m}_j\|_{s;\mathbb{R}^n}\leq C2^{[s-|\kappa|/(|\kappa|-1)]j}$$
for all $s<|\kappa|/(|\kappa|-1)$, so by Young's inequality, the operator $f\to(m\hat{f})^\vee$ maps $L^p(\mathbb{R}^n)$ to $L^{sp/(s-p)}(\mathbb{R}^n)$ for $1\leq p<|\kappa|$. As for $p>|\kappa|$, the proof for H\"{o}lder estimates is quite similar as in the standard Littlewood-Paley theory. The final result is obtained by a parametrix argument similar as above.

\section{Variable Coefficient Case}
We now turn to consider the variable coefficient case. The reasoning will be parallel as Dong and Kim's modification (\cite{DK}) of Campanato's argument (\cite{Camp}). The advantage of our approach is that we can investigate the equations in a unified manner without specifically developing a theory of existence for every particular type of equation, which differs from \cite{Camp}, \cite{DK} and \cite{Wang}.

\subsection{Campanato Type Lemmas}
Lemma \ref{Campanato1} and lemma \ref{Campanato2} presented in this subsection play the similar role as lemma 5.I and lemma 5.II in \cite{Camp}, or lemma 2.4 and lemma 2.5 in \cite{DEK}. The proofs strongly imitate \cite{DK} and \cite{DEK}.

For simplicity of the notation, we write $B^\kappa_\rho=B^\kappa_\rho(0)$ throughout this subsection. We are going to use the following notation: $(f)_{x_0,r}$ denotes the mean value of $f$ on $B^\kappa_r(x_0)$, and for $u\in W^{\mathcal{A},2}_{\text{loc}}(B^\kappa_R)$ and $x_0\in B^\kappa_R$, if $B^\kappa_r(x_0)\Subset B^\kappa_R$, then define
$$Q_{x_0,r}(u):=\sum_{\alpha\in\mathcal{A}}\frac{1}{\alpha!}(D^\alpha u)_{x_0,r}(x-x_0)^\alpha.$$
By the interpolation theorem \ref{Interpolation}, this polynomial is well-defined. For $\alpha,\alpha'\in\mathcal{A}$, a direct computation gives $D^\alpha(x-x_0)^{\alpha'}/\alpha'!\neq0$ if and only if $\alpha=\alpha'$. For any open set $\Omega$, we also define a (quasi-)norm $[|\cdot|]_{\mathcal{A},p;\Omega}$ for $p>0$ by
$$[|u|]^p_{\mathcal{A},p;\Omega}=\sum_{\alpha\in\mathcal{A}}\int_\Omega|D^\alpha u|^p,$$
with the obvious modification when $p=\infty$. Obviously the results presented here will hold with 0 replaced by any general $x_0$.

\begin{lemma}\label{Campanato2}
Fix a $p>0$ which does not equal 1. Suppose $u\in W^{\mathcal{A},1}_{\mathrm{loc}}(B^\kappa_R)\cap W^{\mathcal{A},p}_{\mathrm{loc}}(B^\kappa_R)$ satisfies
$$Pu=\sum_{\alpha\in\mathcal{A},\beta\in\mathcal{B}}a_{\alpha\beta}D^{\alpha+\beta}u=\sum_{\beta\in\mathcal{B}}D^\beta f_\beta,$$
where the operator $P$ still has constant coefficients and still satisfies the hypoellipticity condition (\ref{hypoell}), and $f_\beta\in L^1(B^\kappa_R)\cap L^p(B^\kappa_R)$ for all $\beta\in\mathcal{B}$.

(A) If $p>1$, then
$$
\begin{aligned}
&\left[|u-Q_{0,r}(u)|\right]_{\mathcal{A},p;B^\kappa_{r}}\\
&\leq C\left(\frac{r}{R}\right)^{|\kappa|/p+\min\kappa_l}
[|u-Q_{0,R}(u)|]_{\mathcal{A},p;B^\kappa_{R}}
+C(1+R^K)\sum_{\beta\in\mathcal{B}}\|f_\beta-(f_\beta)_{0,R}\|_{p;B^\kappa_{R}},
\end{aligned}
$$
where the constant $C=C(p,\lambda,\Lambda,\kappa)$, and the natural number $K=K(n,\kappa)$.

(B) If $0<p<1$, then with
$$\varphi_{\mathcal{A},p}(0,r)
:=\inf_{(b_\alpha)\in\mathbb{C}^\mathcal{A}}\sum_{\alpha\in\mathcal{A}}\left(\dashint_{B^\kappa_r}|D^\alpha w-b_\alpha|^p\right)^{1/p},$$
we have
$$\varphi_{\mathcal{A},p}(0,r)\leq C\left(\frac{r}{R}\right)^{\min\kappa_l}
\varphi_{\mathcal{A},p}(0,R)
+C(r^{-1}R)^{{|\kappa|}}(1+R^{2K})\sum_{\beta\in\mathcal{B}}\dashint_{B^\kappa_R}|f_\beta|,$$
where the constant $C=C(p,\lambda,\Lambda,\kappa)$, and the natural number $K=K(n,\kappa)$.
\end{lemma}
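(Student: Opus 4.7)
\emph{Proof plan.} Both parts follow a parametrix-plus-remainder strategy modelled on the localisation preceding Theorem \ref{Local}: write $u=v+w$, where $v$ is built by applying the global solution operator with symbol $(1-\phi(\xi))/p(\xi)$ (with $\phi$ a smooth Fourier-space cut-off of a neighbourhood of the origin) to a localised source, and $w$ satisfies a smooth equation on an interior ball, to which Lemma \ref{Campanato1} is applied termwise to each $D^\alpha w$ with $\alpha\in\mathcal{A}$.

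For part (A), fix cut-offs $\varphi\in C_0^\infty(B^\kappa_R)$ with $\varphi\equiv1$ on $B^\kappa_{R_1}\supset\bar B^\kappa_{R/2}$, set $g_\beta:=f_\beta-(f_\beta)_{0,R}$, and define
\[
v:=\Bigl(\frac{1-\phi(\xi)}{p(\xi)}\sum_{\beta\in\mathcal{B}}(i\xi)^\beta\widehat{\varphi g_\beta}(\xi)\Bigr)^\vee.
\]
For each $\alpha\in\mathcal{A}$ the multiplier $(1-\phi(\xi))(i\xi)^{\alpha+\beta}/p(\xi)$ is bounded and satisfies the assumption (\ref{MihHor}), so Theorem \ref{Mult}(A) gives $\|D^\alpha v\|_{p;\mathbb{R}^n}\le C\sum_{\beta\in\mathcal{B}}\|g_\beta\|_{p;B^\kappa_R}$. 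On $B^\kappa_{R_1}$ where $\varphi\equiv1$, a direct Fourier-side computation shows that $Pw=Pu-Pv$ reduces to a constant ($\sum_\beta D^\beta(f_\beta)_{0,R}$, which is nonzero only when $0\in\mathcal{B}$) plus the Schwartz convolution $\check\phi\ast\sum_\beta D^\beta(\varphi g_\beta)$, so $Pw\in C^\infty(B^\kappa_{R_1})$. Since $0\notin\mathcal{A}$ in all non-trivial cases, $D^\alpha w$ solves $P(D^\alpha w)=D^\alpha(Pw)$ with smooth right-hand side whose $L^1$-Sobolev norms up to any fixed order are controlled via Young's inequality by $\sum_\beta\|g_\beta\|_{1;B^\kappa_R}$ and then by $R^{|\kappa|(1-1/p)}\sum_\beta\|g_\beta\|_{p;B^\kappa_R}$ via H\"older. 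Lemma \ref{Campanato1} (with $H=0$) applied to $D^\alpha w$ with constant $b=(D^\alpha u)_{0,R}$, together with the identification $D^\alpha(u-Q_{0,R}(u))=D^\alpha u-(D^\alpha u)_{0,R}$ and the reassembly $D^\alpha u=D^\alpha w+D^\alpha v$ via $\|D^\alpha v-(D^\alpha v)_{0,r}\|_{p;B^\kappa_r}\le2\|D^\alpha v\|_{p;\mathbb{R}^n}$, followed by summation over $\alpha\in\mathcal{A}$, delivers (A).

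For part (B) the same split applies, but strong-$L^p$ multiplier bounds fail for $p<1$. Build $v$ from $\varphi f_\beta$ directly; Theorem \ref{Mult}(A)'s weak-$(1,1)$ bound yields $\|D^\alpha v\|_{L^{1,\infty}(\mathbb{R}^n)}\le CR^{|\kappa|}\sum_\beta\dashint_{B^\kappa_R}|f_\beta|$, and Kolmogorov's inequality $(\dashint_E|g|^p)^{1/p}\le C_p|E|^{-1}\|g\|_{L^{1,\infty}}$ (valid for $p<1$) at $E=B^\kappa_r$ produces the $(r^{-1}R)^{|\kappa|}$ prefactor. For $w$, apply Lemma \ref{Campanato1} (still valid at $p<1$ per its footnote) to $D^\alpha w$ with exponent $q=1$ rather than $p$, so that its source term scales as $r^{-|\kappa|}$ instead of $r^{-|\kappa|/p}$; Jensen's $(\dashint|g|^p)^{1/p}\le\dashint|g|$ for $p<1$ then upgrades the left-hand side to the $L^p$-mean-oscillation entering $\varphi_{\mathcal{A},p}(0,r)$, while an interior reverse-H\"older estimate (a consequence of hypoellipticity applied to $P(D^\alpha w-b_\alpha)=D^\alpha(Pw)$) converts the $L^1$-average on the right into the $L^p$-average required to recover $\varphi_{\mathcal{A},p}(0,R)$ after taking the infimum over $b_\alpha$. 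The main technical obstacle throughout is the bookkeeping of the composite $R$-exponent arising from Schwartz-kernel decay, the H\"older/Kolmogorov conversions, the interior reverse-H\"older estimate, and Lemma \ref{Campanato1}'s intrinsic $R^K$-factor, which must combine into the $R^{2K}$ weight in the statement with $K=K(n,\kappa)$; the most delicate point is ensuring the exponent's independence of $p$ and $\lambda,\Lambda$.
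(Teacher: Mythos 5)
Your part (A) is essentially the paper's argument: the parametrix $v$ built from the Fourier-side cut-off $(1-\phi(\xi))/p(\xi)$, the strong $(p,p)$ bound from Theorem \ref{Mult} for $v$, the observation that $Pw=\sum_\beta f_\beta*D^\beta\check\phi$ is Schwartz so that Lemma \ref{Campanato1} applies termwise to each $D^\alpha w$, and finally the triangle inequality for the reassembly. The paper extends $f_\beta$ by zero and performs the mean-subtraction at the end by replacing $u$ with $u-Q_{0,R}(u)$ rather than smoothly cutting off and subtracting $(f_\beta)_{0,R}$ upfront as you do, but these are cosmetic differences.

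Your part (B), however, contains a gap that the paper avoids by a simpler move. You propose applying Lemma \ref{Campanato1} to $D^\alpha w$ with exponent $q=1$, downgrading the left side to an $L^p$-average via Jensen, and then invoking an ``interior reverse-H\"older estimate'' to convert the $L^1$-average $\dashint_{B^\kappa_R}|D^\alpha w-b_\alpha|$ back into $(\dashint_{B^\kappa_R}|D^\alpha w-b_\alpha|^p)^{1/p}$ in order to recognize $\varphi_{\mathcal{A},p}(0,R)$ after infimizing over $b_\alpha$. Jensen goes the wrong direction for that last step: for $0<p<1$ one has $(\dashint|g|^p)^{1/p}\le\dashint|g|$, so what you need is a genuine reverse inequality $\dashint_{B^\kappa_R}|D^\alpha w-b_\alpha|\le C\bigl(\dashint_{B^\kappa_R}|D^\alpha w-b_\alpha|^p\bigr)^{1/p}+\cdots$, which is not a consequence of hypoellipticity on the \emph{same} ball. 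An interior sup-estimate of De Giorgi--Moser type does give control of the $L^1$-average by the $L^p$-average only after shrinking the ball (e.g.\ from $B^\kappa_R$ to $B^\kappa_{R/2}$), which would force you to reorganize the whole argument around an intermediate radius and track extra source terms --- and the machinery to prove that sup-bound is essentially Lemma \ref{Campanato1} itself, so the detour is circular. The paper sidesteps all of this by noting that Lemma \ref{Campanato1} is stated and proved for \emph{every} $p>0$ (the footnote in its proof explicitly covers $p<1$): one simply applies it at the exponent $p$ directly to $D^\alpha w$, then uses the quasi-subadditivity of $\|\cdot\|_p$ for $p<1$ to pass from $w$ to $u$, and takes the infimum over $(b_\alpha)$ at the end. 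Your Kolmogorov-inequality treatment of $v$ is correct and matches the paper; it is only the handling of $w$ that should be replaced by the direct application of Lemma \ref{Campanato1} at exponent $p$.
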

\begin{proof}
If $r\geq R/2$, then the inequality follows from an easy calculation. So it suffices to consider $r<R/2$.

As in the previous section, we are still going to use a parametrix argument, since it is in general impossible to construct a solution of $Pv=f$ such that $v$ is controlled in terms of $f$. Define
$$v=\left(\frac{1-\phi(\xi)}{p(\xi)}\sum_{\beta\in\mathcal{B}}(i\xi)^\beta\hat{f}_\beta(\xi)\right)^\vee,$$
where the $f_\beta$'s are extended to be zero outside $B^\kappa_R$, and $\phi\in C_0^\infty(\mathbb{R}^n)$ is a bump function with range $[0,1]$ which equals 1 on some neighbourhood of 0. Write $w=u-v$.

(A) It is easily verified that for any $\alpha\in\mathcal{A}$, the multiplier
$$\frac{1-\phi(\xi)}{p(\xi)}\sum_{\beta\in\mathcal{B}}(i\xi)^{\alpha+\beta}$$
meets all the requirements of theorem \ref{Mult}. So for $p>1$, there is a constant $C=C(n,p,\lambda,\Lambda,\phi)$ such that
$$\sum_{\alpha\in\mathcal{A}}\|D^\alpha v\|_{p;\mathbb{R}^n}\leq C\sum_{\beta\in\mathcal{B}}\|f_\beta\|_{p;B^\kappa_R}.$$

The distribution $w=u-v\in\mathfrak{D}'(B^\kappa_R)$ satisfies
$$Pw=\sum_{\beta\in\mathcal{B}}f_\beta*(D^\beta\check{\phi}).$$
The right-hand-side is the restriction of a Schwartz function on $B^\kappa_R$, the $L^1$ norm of whose $j$'th derivative is less than $$C(j,\phi)\sum_{\beta\in\mathcal{B}}\|f_\beta\|_{1;B^\kappa_R}.$$
By hypoellipticity, $w\in C^\infty(B^\kappa_R)$. For any $\alpha\in\mathcal{A}$, the function $D^\alpha w$ satisfies
$$PD^\alpha w=\sum_{\beta\in\mathcal{B}}f_\beta*(D^{\alpha+\beta}\check{\phi}).$$
Taking $a=(D^\alpha w)_{0,R}$, $f=\sum_{\beta\in\mathcal{B}}f_\beta*(D^{\alpha+\beta}\check{\phi})$ in lemma \ref{Campanato1}, we obtain
$$
\begin{aligned}
\|&D^\alpha w-(D^\alpha w)_{0,r}\|_{p;B^\kappa_r}\\
&\leq C\left(\frac{r}{R}\right)^{|\kappa|/p+\min\kappa_l}
\|D^\alpha w-(D^\alpha w)_{0,R}\|_{p;B^\kappa_R}+C(1+R^{K})\sum_{\beta\in\mathcal{B}}\|f_\beta\|_{1;B^\kappa_R}.
\end{aligned}
$$
Summing over $\alpha\in\mathcal{A}$, this gives
$$
\begin{aligned}
&\left[|w-Q_{0,r}(w)|\right]_{\mathcal{A},p;B^\kappa_{r}}\\
&\quad\leq C\left(\frac{r}{R}\right)^{|\kappa|/p+\min\kappa_l}
[|w-Q_{0,R}(w)|]_{\mathcal{A},p;B^\kappa_{R}}
+C(1+R^K)\sum_{\beta\in\mathcal{B}}\|f_\beta\|_{1;B^\kappa_R}.
\end{aligned}
$$

Now since the operator $u\to Q_{0,r}(u)$ is linear, and
$$\|D^\alpha v-(D^\alpha v)_{0,r}\|_{p;B^\kappa_r}\leq 2^{p-1}\|D^\alpha v\|_{p;B^\kappa_r},$$
we compute
$$
\begin{aligned}
\left[|u\right.&\left.-Q_{0,r}(u)|\right]_{\mathcal{A},p;B^\kappa_{r}}\\
&\leq[|w-Q_{0,r}(w)|]_{\mathcal{A},p;B^\kappa_{r}}+2^p[|v|]_{\mathcal{A},p;B^\kappa_{r}}\\
&\leq C\left(\frac{r}{R}\right)^{|\kappa|/p+\min\kappa_l}
[|w-Q_{0,R}(w)|]_{\mathcal{A},p;B^\kappa_{R}}
+C(1+R^K)\sum_{\beta\in\mathcal{B}}\|f_\beta\|_{1;B^\kappa_R}\\
&\leq C\left(\frac{r}{R}\right)^{|\kappa|/p+\min\kappa_l}
[|u-Q_{0,R}(u)|]_{\mathcal{A},p;B^\kappa_{R}}
+C(1+R^K)\sum_{\beta\in\mathcal{B}}\|f_\beta\|_{p;B^\kappa_R}.
\end{aligned}
$$
In order to obtain the desired result, just replace $u$ by $\tilde{u}=u-Q_{0,R}(u)$ in the above inequality. If $\mathcal{B}=\{0\}$, then $Pu=f$ a.e., so $P\tilde{u}=f-(f)_{0,R}$. If $\mathcal{B}\neq\{0\}$, then none of the indices in $\mathcal{B}$ could be 0 (since otherwise $\alpha\cdot\kappa=m\,\forall\alpha\in\mathcal{A}$, implying $\mathcal{B}=\{0\}$, a contradiction), so
$$P\tilde{u}=Pu=\sum_{\beta\in\mathcal{B}}D^\beta f_\beta=\sum_{\beta\in\mathcal{B}}D^\beta(f_\beta-(f)_{0,R}).$$
Noticing that for all $\alpha\in\mathcal{A}$,
$$D^\alpha(\tilde{u}-Q_{0,r}(\tilde{u}))=D^\alpha\tilde{u}-(D^\alpha\tilde{u})_{0,r}
=D^\alpha{u}-(D^\alpha{u})_{0,r}=D^\alpha({u}-Q_{0,r}({u})),$$
the desired result follows.

(B) Just as in (A), there is a constant $C=C(n,\lambda,\Lambda,\phi)$ such that for all $t\geq0$,
$$\sum_{\alpha\in\mathcal{A}}\left|\left\{x\in\mathbb{R}^n:|D^\alpha v(x)|\geq t\right\}\right|\leq\frac{C}{t}\sum_{\beta\in\mathcal{B}}\|f_\beta\|_{1;B^\kappa_R}.$$
So for $0<p<1$,
$$
[|v|]_{\mathcal{A};p,B^\kappa_\rho}^p
=\int_0^\infty pt^{p-1}\sum_{\alpha\in\mathcal{A}}\left|\left\{x\in B^\kappa_\rho:|D^\alpha v(x)|\geq t\right\}\right|dt\\
=\int_0^{\tau}+\int_{\tau}^\infty,
$$
where $\tau$ is to be determined. The first integral is bounded by $\tau^p|B^\kappa_r|$, and the second integral is controlled by the weak (1,1) inequality. So we obtain
$$
[|v|]_{\mathcal{A},p;B^\kappa_\rho}^p\leq\tau^p|B^\kappa_\rho|+\frac{C_p}{1-p}\tau^{p-1}\sum_{\beta\in\mathcal{B}}\|f_\beta\|_{1;B^\kappa_R}.
$$
Taking $\tau$ so that it minimizes the right-hand-side, we obtain
$$[|v|]_{\mathcal{A},p;B^\kappa_\rho}^p\leq C_p|B^\kappa_\rho|^{1-p}\left(\sum_{\beta\in\mathcal{B}}\|f_\beta\|_{1;B^\kappa_R}\right)^p.$$
Note that $\|\cdot\|_p$ is quasi-subadditive when $0<p<1$. For any $(b_\alpha)\in\mathbb{C}^\mathcal{A}$, with the aid of lemma \ref{Campanato1} and a similar computation as in the case $p>1$,
$$
\begin{aligned}
&\varphi_{\mathcal{A},p}(0,r)\leq\left(\dashint_{B^\kappa_r}|D^\alpha u-(D^\alpha w)_{0,r}|^p\right)^{1/p}\\
&\leq C_p\left(\dashint_{B^\kappa_r}|D^\alpha w-(D^\alpha w)_{0,r}|^p\right)^{1/p}+C_p\left(\dashint_{B^\kappa_r}|D^\alpha v|^p\right)^{1/p}\\
&\leq C\left(\frac{r}{R}\right)^{\min\kappa_l}
\sum_{\alpha\in\mathcal{A}}\left(\dashint_{B^\kappa_{R}}|D^\alpha w-b_\alpha|^p\right)^{1/p}
+Cr^{-{|\kappa|}}(1+R^K)\sum_{\beta\in\mathcal{B}}\|f_\beta\|_{1;B^\kappa_R}\\
&\leq C\left(\frac{r}{R}\right)^{\min\kappa_l}
\sum_{\alpha\in\mathcal{A}}\left(\dashint_{B^\kappa_{R}}|D^\alpha u-b_\alpha|^p\right)^{1/p}
+C(r^{-1}R)^{|\kappa|}(1+R^{2K})\sum_{\beta\in\mathcal{B}}\dashint_{B^\kappa_R}|f_\beta|.
\end{aligned}
$$
Taking infimum over all $(b_\alpha)\in\mathbb{C}^\mathcal{A}$, we obtain the desired result.
\end{proof}

\subsection{Freezing Coefficients}
We finally turn to the coefficient freezing step. We first present the following version of the absorbtion lemma:
\begin{lemma}\label{absorbtion}
Let $\varphi,\psi$ be non-negative measurable functions defined on $(0,R_0]$. Suppose that $\phi,\psi$ has the following quasi-monotone property: there are constants, $\tau\in(0,1),A\geq1$ such that for $r\in[\tau R,R]$ with $R\in(0,R_0]$, the inequality
$$A^{-1}\varphi(\tau R)\leq\varphi(r)\leq A\varphi(R),A^{-1}\psi(\tau R)\leq\psi(r)\leq A\psi(R)$$
holds. Suppose also for some constant $\gamma\in(0,1]$,
$$\varphi(\tau R)\leq\tau^\gamma\varphi(R)+\psi(R)$$
For all $R\in(0,R_0]$. Then for $r\in(0,R_0]$,
$$\varphi(r)\leq C\left(\frac{r}{R_0}\right)^\gamma\varphi(R_0)+Cr^{\gamma}\int_{r}^{R_0}\frac{\psi(\rho)}{\rho^{\gamma+1}}d\rho,$$
where the constant $C$ depends on $\tau,A$.
\end{lemma}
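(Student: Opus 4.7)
The proof is a standard Campanato-style iteration argument: discretize along the geometric sequence $R_k := \tau^k R_0$, telescope the inequality, then pass back from the discrete scale to arbitrary $r$ (and from a discrete sum to a continuous integral) via the two quasi-monotone properties.

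First I would rewrite the hypothesis in telescoping form. Dividing $\varphi(\tau R) \leq \tau^\gamma \varphi(R) + \psi(R)$ by $(\tau R)^\gamma$ gives $F(\tau R) \leq F(R) + \tau^{-\gamma}\psi(R)/R^\gamma$ where $F(R) := \varphi(R)/R^\gamma$. Iterating from $R_0$ down to $R_K$ and multiplying back through, then invoking the quasi-monotonicity of $\varphi$ to replace $\varphi(r)$ by $A\varphi(R_K)$ for the unique $K$ with $R_{K+1} < r \leq R_K$, yields a bound of the form
$$\varphi(r) \leq C \left(\frac{r}{R_0}\right)^\gamma \varphi(R_0) + C r^\gamma \sum_{j=0}^{K-1} \frac{\psi(R_j)}{R_j^\gamma},$$
where the powers of $r$ and $R_0$ on the right are recovered from the two-sided relation $r \leq R_K \leq r/\tau$.

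The remaining task is to dominate the discrete sum by $\int_r^{R_0} \psi(\rho) \rho^{-\gamma-1} d\rho$. For each $j \geq 1$, applying the quasi-monotonicity of $\psi$ with outer scale $R = R_{j-1}$ gives $\psi(R_j) \leq A\psi(\rho)$ for all $\rho \in [R_j, R_{j-1}]$; integrating this against $\rho^{-\gamma-1}$ and computing the elementary $\rho^{-\gamma}$ primitive yields
$$\frac{\psi(R_j)}{R_j^\gamma} \leq \frac{A\gamma}{1 - \tau^\gamma}\int_{R_j}^{R_{j-1}} \frac{\psi(\rho)}{\rho^{\gamma+1}}d\rho.$$
Summing in $j$ telescopes the dyadic annuli into $[R_{K-1}, R_0] \subset [r, R_0]$, producing the integral bound with constants depending only on $\tau$, $A$, and $\gamma$. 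The one subtlety I expect is the boundary term at $j=0$, namely $\psi(R_0)/R_0^\gamma$, for which the quasi-monotone property furnishes only a lower (not upper) bound in terms of neighboring $\psi$-values, so it cannot be swallowed into the integral; it is absorbed into the coefficient of $\varphi(R_0)$ by enlarging $C$, which is harmless in the intended applications since $\psi(R_0)$ is a fixed finite datum of the problem.
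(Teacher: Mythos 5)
Your proposal follows the same Campanato iteration as the paper's proof: telescope the one-step inequality along $R_k=\tau^kR_0$, pass from $r$ to the nearest dyadic scale $R_K$ via quasi-monotonicity of $\varphi$, and convert the discrete sum $\sum_j\psi(R_j)/R_j^\gamma$ to the integral $\int\psi(\rho)\rho^{-\gamma-1}\,d\rho$ using quasi-monotonicity of $\psi$ across each annulus $[R_j,R_{j-1}]$. The normalization $F=\varphi/R^\gamma$ is a cosmetic difference; the paper phrases the sum-to-integral step as an infimum over points $\rho_j\in[\tau^j,\tau^{j-1}]$, but this is the same estimate as comparing $\psi(R_j)$ to the average over the interval.

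The one place your argument actually departs from a correct proof is exactly the ``subtlety'' you identify: the boundary term $\psi(R_0)/R_0^\gamma$. You claim it can be ``absorbed into the coefficient of $\varphi(R_0)$ by enlarging $C$,'' but this cannot work for the lemma as stated: there is nothing in the hypotheses that controls $\psi(R_0)$ in terms of $\varphi(R_0)$, so a constant $C$ depending only on $\tau,A$ cannot swallow a term proportional to $(r/R_0)^\gamma\psi(R_0)$ into $(r/R_0)^\gamma\varphi(R_0)$. (For what it is worth, the paper's own infimum argument has the same blind spot, since the $j=0$ annulus $[R_0,\tau^{-1}R_0]$ lies outside the domain of $\psi$ and the infimum there degenerates to the single point $\rho_0=1$.) The clean fix, which requires no new ideas, is to \emph{not} run the iteration all the way up to $R_0$: use the hypothesis only for $j=1,\dots,K-1$ to get $\varphi(R_K)\leq\tau^{(K-1)\gamma}\varphi(R_1)+\sum_{j=1}^{K-1}\tau^{(K-1-j)\gamma}\psi(R_j)$, and then bound $\varphi(R_1)=\varphi(\tau R_0)\leq A\varphi(R_0)$ by the quasi-monotonicity of $\varphi$ itself, which is available precisely at $R=R_0$. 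This replaces the problematic $\psi(R_0)$ term by an extra factor of $A$ in front of $\varphi(R_0)$, which is harmless, and leaves only indices $j\geq1$ in the sum, all of which you already treat correctly. Finally, note that the constant produced by your computation depends on $\gamma$ as well (through $\gamma/(1-\tau^\gamma)$), so the dependence list ``$\tau,A$'' in the statement should really read ``$\tau,A,\gamma$.''
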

\begin{proof}
By iteration and using the quasi-monotone property, we obtain, for all tuples $\{\rho_j\}_{j=0}^k$ with $\rho_j\in[\tau^{j},\tau^{j-1}]$,
$$
\begin{aligned}
\varphi(\tau^{k+1} R_0)&\leq \tau^{(k+1)\gamma}\varphi(R_0)+\tau^{k\gamma}\sum_{j=0}^k\frac{\psi(\tau^jR_0)}{\tau^{j\gamma}}\\
&\leq\tau^{(k+1)\gamma}\varphi(R_0)+A\tau^{k\gamma}\sum_{j=0}^k\frac{\psi(\rho_jR_0)}{\tau^{j\gamma}}.
\end{aligned}
$$
Taking infimum for all tuples $\{\rho_j\}_{j=0}^k$ with $\rho_j\in[\tau^{j},\tau^{j-1}]$, we see that the right-hand-side is bounded from above by
$$\tau^{(k+1)\gamma}\varphi(R_0)+C(\tau^kR_0)^\gamma\int_{\tau^kR_0}^{R_0}\frac{\psi(\rho)}{\rho^{\gamma+1}}d\rho,$$
with $C$ depending on $\tau,A$. Now let $k$ be such that $\tau^{k+1}R_0<r\leq\tau^kR_0$. The desired result follows immediately.
\end{proof}

The interior estimate stated in theorem \ref{VariablePrin} employs lemma \ref{Campanato1}.
\begin{proof}[Proof of theorem \ref{VariablePrin}.]
It suffices to prove for $\theta<1/2$, since once this is done, the general case follows from a finite-covering argument. Fix any $x_0\in B^\kappa_{\theta R_0}$ and let $R\leq R_0/2$. Then for any $x_0\in B^\kappa_{\theta R_0}$, $B^\kappa_R(x_0)\Subset B^\kappa_{R_0}$. Set
$$P_0=\sum_{\alpha\in\mathcal{A},\beta\in\mathcal{B}}a_{\alpha\beta}(x_0)D^{\alpha+\beta},$$
and rewrite the equation as
$$P_0u=\sum_{\beta\in\mathcal{B}}\sum_{\alpha\in\mathcal{A}}D^\beta(a_{\alpha\beta}(x_0)-a_{\alpha\beta})D^\alpha u)
+\sum_{\beta\in\mathcal{B}}D^\beta f_\beta=:\sum_{\beta\in\mathcal{B}}D^\beta F_\beta.$$
If $\mathcal{B}=\{0\}$, then rewrite it as
$$P_0(u-Q(u))=\sum_{\alpha\in\mathcal{A}}(a_{\alpha\beta}(x_0)-a_{\alpha\beta})D^\alpha u
+(f-f(x_0))=:F$$
where $Q(u)=\sum_{\alpha\in\mathcal{A}}1/\alpha!D^\alpha u(x_0)(x-x_0)^\alpha$. Fix $0<p<1$, and let
$$\phi_{\mathcal{A},p}(x_0,r)
:=\inf_{(b_\alpha)\in\mathbb{C}^\mathcal{A}}\sum_{\alpha\in\mathcal{A}}\left(\dashint_{B^\kappa_r(x_0)}|D^\alpha w-b_\alpha|^p\right)^{1/p}.$$
By lemma \ref{Campanato2}, in either case $\mathcal{B}\neq\{0\}$ and $\mathcal{B}=\{0\}$, for $r\in(0,R]$,
\begin{equation}\label{temp3}
\begin{aligned}
\varphi&_{\mathcal{A},p}(x_0,r)\\
&\leq C\left(\frac{r}{R}\right)^{\min\kappa_l}
\varphi_{\mathcal{A},p}(x_0,R)
+C(r^{-1}R)^{|\kappa|}(1+R^{2K})\sum_{\beta\in\mathcal{B}}\dashint_{B^\kappa_R(x_0)}|F_\beta|\\
&\leq C\left(\frac{r}{R}\right)^{\min\kappa_l}
\varphi_{\mathcal{A},p}(x_0,R)\\
&\quad+C\left(\frac{r}{R}\right)^{-{|\kappa|}}(1+R_0^{2K})[|u|]_{\mathcal{A},\infty;B^\kappa_{B^{R_0}}}\bar\omega_a(R)
+C\left(\frac{r}{R}\right)^{-{|\kappa|}}(1+R_0^{2K})\bar\omega_f(R).\\
\end{aligned}
\end{equation}
Setting
$$\varphi(\rho)=\varphi_{\mathcal{A},p}(x_0,\rho),$$
and
$$\psi(\rho)=C\tau^{-{|\kappa|}}(1+R_0^{2K})[|u|]_{\mathcal{A},\infty;B^\kappa_{R_0}}\bar\omega_a(\rho)
+C\tau^{-{|\kappa|}}(1+R_0^{2K})\bar\omega_f(\rho),$$
we see that both $\varphi$ and $\psi$ match the requirements of lemma \ref{absorbtion} on $(0,R_0]$, and satisfy
$$\varphi(\tau R)\leq \tau^\gamma\varphi(R)+A\psi(R)$$
for all $R\in(0,R_0/2]$, where $\tau\in(0,1)$ is such that $\tau^{\min\kappa_l-\gamma}C\leq1$ with $C$ being the constant on the right-hand-side of (\ref{temp3}), and $A=A(\lambda,\Lambda,\gamma,R_0)$. By lemma \ref{absorbtion},
$$
\begin{aligned}
\varphi_{\mathcal{A},p}(x_0,r)&\leq C\left(\frac{r}{R_0}\right)^\gamma\varphi_{\mathcal{A},p}(x_0,R_0)+Cr^{\gamma}\int_{r}^{R_0}\frac{\psi(\rho)}{\rho^{\gamma+1}}d\rho\\
&\leq C[|u|]_{\mathcal{A},\infty;B^\kappa_{R_0}}r^\gamma+Cr^{\gamma}\int_{r}^{R_0}\frac{\psi(\rho)}{\rho^{\gamma+1}}d\rho.
\end{aligned}
$$
where the constant $C=C(\lambda,\Lambda,\kappa,\gamma,R_0)$. We now follow the reasoning as in the proof of theorem 1.5 in \cite{DK}. It follows that
$$
\begin{aligned}
\sup_{\substack{x,y\in B^\kappa_{\theta R} \\ |x-y|_\kappa\leq r}}|&D^\alpha u(x)-D^\alpha u(y)|\\
&\leq C\left([|u|]_{\mathcal{A},\infty;B^\kappa_{R}}r^\gamma+
\int_0^{r}r'^{\gamma-1}\int_{r'}^{R_0}\frac{\bar\omega_f(\rho)}{\rho^{1+\gamma}}d\rho dr'\right)\\
&\quad\quad\quad+C[|u|]_{\mathcal{A},\infty;B^\kappa_{R}}\left(
\int_0^{r}r'^{\gamma-1}\int_{r'}^{R_0}\frac{\bar\omega_a(\rho)}{\rho^{1+\gamma}}d\rho dr'\right).
\end{aligned}
$$
Finally, by Fubini's theorem, we compute
$$\int_0^rr'^{\gamma-1}\int_{r'}^{R_0}\frac{\psi(\rho)}{\rho^{1+\gamma}}d\rho dr'=\gamma^{-1}\int_0^r\frac{\psi(\rho)}{\rho}d\rho
+\gamma^{-1}r^\gamma\int_{r^\gamma}^{R_0}\frac{\psi(\rho)}{\rho^{1+\gamma}}d\rho.$$
Summing up, we obtain the desired result. Finally, just cover $B^\kappa_{\theta R_0}$ by finitely may balls $B^\kappa_R(x_0)\Subset B^\kappa_{R_0}$, and the previous argument applies.
\end{proof}

We now remove the assumption $u\in C^{\mathcal{A}}$. To obtain this, we need a hypoellipticity lemma:
\begin{lemma}\label{hypoellipticvar}
Consider the differential operator
$$P=\sum_{\beta\in\mathcal{B}}\sum_{\alpha\in\mathcal{A}}D^\beta(a_{\alpha\beta}D^\alpha),$$
where the coefficients are smooth on a domain $\Omega$, and the symbol $p(x,\xi)$ satisfies condition (\ref{Varellip}). Then $P$ is hypoelliptic.
\end{lemma}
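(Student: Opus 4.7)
\emph{Proof plan.} The strategy is to combine the frozen-coefficient parametrix of Section~2 with a perturbation-and-bootstrap argument built on Theorem~\ref{Mult}.

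Since hypoellipticity is a local property, fix $x_0 \in \Omega$ and a small $\kappa$-ball $B^\kappa_r(x_0) \Subset \Omega$ with $r > 0$ to be chosen later. Suppose $u \in \mathfrak{D}'(\Omega)$ satisfies $Pu = f$ with $f \in C^\infty(B^\kappa_r(x_0))$; the goal is to show $u \in C^\infty$ near $x_0$. Pick a cutoff $\chi \in C^\infty_c(B^\kappa_r(x_0))$ with $\chi \equiv 1$ on $B^\kappa_{r/2}(x_0)$ and set $v := \chi u$. By the Leibniz formula, $Pv = \chi f + [P,\chi] u$, where $[P,\chi]$ is a differential operator each of whose monomials has $\kappa$-weighted order strictly less than $m$, supported on the annular shell where $\chi$ varies; the contribution from this commutator will be absorbed inductively via a family of nested cutoffs. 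Let $P_0 := \sum_{\alpha \in \mathcal{A}, \beta \in \mathcal{B}} a_{\alpha\beta}(x_0)\,D^{\alpha+\beta}$, whose constant-coefficient symbol $p_0(\xi)$ obeys~(\ref{hypoell}) with the original $\lambda,\Lambda$. Pick $\phi \in C^\infty_c(\mathbb{R}^n)$ equal to $1$ near the origin and define the frozen parametrix
$$E g := \left(\frac{1-\phi(\xi)}{p_0(\xi)}\,\hat g(\xi)\right)^\vee.$$
For every multi-index $\gamma$ with $\gamma\cdot\kappa \leq m$, the multiplier $(1-\phi(\xi))(i\xi)^\gamma / p_0(\xi)$ satisfies the H\"ormander condition~(\ref{MihHor}) of Theorem~\ref{Mult}, so $D^\gamma E$ is $L^p\to L^p$ bounded for every $p \in (1,\infty)$; moreover $EP_0 = I - S$ with $S$ convolution with a Schwartz function.

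Applying $E$ to $P_0 v = Pv - (P - P_0)v$ yields $v = EPv + Sv - E(P-P_0)v$. The coefficients $b_{\alpha\beta} := a_{\alpha\beta} - a_{\alpha\beta}(x_0)$ satisfy $\|b_{\alpha\beta}\|_{L^\infty(B^\kappa_r(x_0))} \leq C_0 r$ by smoothness. Expanding $E(P-P_0) = \sum E D^\beta (b_{\alpha\beta}\,\cdot\,) D^\alpha$ and commuting the multiplication by $b_{\alpha\beta}$ past the derivatives (each commutator replaces $b_{\alpha\beta}$ by one of its derivatives and drops the effective $\kappa$-order by one), one obtains on the anisotropic $L^p$-based Sobolev scale an estimate of the form
$$\|E(P-P_0)\,v\|_{W^{\mathcal{A},p}_{\mathrm{loc}}} \leq C r \,\|v\|_{W^{\mathcal{A},p}_{\mathrm{loc}}} + C_1(r) \sum_{\gamma \in \mathcal{A}' \setminus \mathcal{A}} \|D^\gamma v\|_{L^p_{\mathrm{loc}}}.$$
Choosing $r$ small enough to absorb the $Cr$-term gives an interior a priori estimate controlling the top-order derivatives of $v$ in terms of $Pv$ and lower-order derivatives. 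A bootstrap then closes the argument: $v$ is a compactly supported distribution of finite order, hence lies in some $H^{-s_0}$; applying the estimate to the differentiated equations $P D^\gamma v = D^\gamma Pv + [P,D^\gamma] v$, iterated on $|\gamma|$ and on $p$ (and using nested cutoffs to absorb the shell commutator), eventually places $v$ in every $W^{k,p}_{\mathrm{loc}}$. The classical Sobolev embedding yields $v \in C^\infty$ near $x_0$.

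The main obstacle is the perturbation estimate above: making rigorous that $E(P - P_0)$ has operator norm $O(r)$ on the anisotropic Sobolev scale, uniformly across iteration steps. Because the paper develops only the multiplier theorem rather than a full anisotropic pseudodifferential calculus, this commutator bookkeeping has to be carried out by hand from Theorem~\ref{Mult} together with interpolation; smoothness of the coefficients is what guarantees that the corrections produced by each commutation remain genuinely of lower $\kappa$-order.
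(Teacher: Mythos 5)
Your proposal takes a genuinely different route from the paper's: you go via a frozen-coefficient parametrix $E$, a Neumann-type perturbation estimate for $E(P-P_0)$, and a bootstrap; the paper instead invokes a G{\aa}rding-type inequality followed by mollification. Both are standard ways to attack hypoellipticity of variable-coefficient operators, and the parametrix route certainly \emph{can} be made to work, but as written your argument has a gap, and it is not the one you single out.

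You flag the $O(r)$ operator-norm bound for $E(P-P_0)$ on the anisotropic $W^{\mathcal{A},p}$ scale as the main obstacle, but that part is actually the more tractable one: for $\alpha'\in\mathcal{A}$ and $\beta\in\mathcal{B}$, the multiplier $D^{\alpha'}ED^\beta$ has $\kappa$-order zero and is $L^p$-bounded by Theorem~\ref{Mult}, so $\|D^{\alpha'}E\,D^\beta(b_{\alpha\beta}D^\alpha v)\|_{L^p}\le C\|b_{\alpha\beta}\|_\infty\|D^\alpha v\|_{L^p}\le Cr\|D^\alpha v\|_{L^p}$, and the terms where derivatives fall on $b_{\alpha\beta}$ involve $D^\gamma v$ with $\gamma\cdot\kappa<m$, which are handled by Corollary~\ref{Interpolation}. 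The genuine gap is the \emph{start} of the bootstrap. Your perturbation inequality is an \emph{a priori} estimate: both sides are finite only once you already know $v\in W^{\mathcal{A},p}_{\mathrm{loc}}$. A distributional solution is only known to lie in some $H^{-s_0}$, and the paper supplies no multiplier or commutator theory on negative-order anisotropic spaces. To run the iteration from $H^{-s_0}$ you would need either (i) to prove that $E(P-P_0)$ is a contraction on $H^{-s_0}$ — but then the relevant smallness constant is controlled by $\|b_{\alpha\beta}\|_{C^{[s_0]+1}}$, not $\|b_{\alpha\beta}\|_{L^\infty}$, and this does not vanish as $r\to0$ without a paradifferential or pseudodifferential commutator calculus; or (ii) to mollify $u$, derive uniform bounds on the mollifications from the a priori estimate, control the resulting commutators by a Friedrichs-type lemma, and pass to the limit. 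Route (ii) is exactly what the paper's ``G{\aa}rding inequality $+$ mollification'' packs into one line, and it is also the device used in the corollary following this lemma. As it stands, the sentence ``applying the estimate to the differentiated equations $\ldots$ eventually places $v$ in every $W^{k,p}_{\mathrm{loc}}$'' is asserting the conclusion, not deriving it: there is no mechanism in the paper's toolbox that lets you apply the $W^{\mathcal{A},p}$ estimate to a distribution that is not yet known to be in $W^{\mathcal{A},p}$.
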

To prove this lemma, we first deduce a G{\aa}rding type inequality for the operator, and then use mollification. The proof is quite standard, so we omit it here.

\begin{corollary}
Suppose the same regularity conditions on the coefficients and right-hand-side of the equation
$$Pu=\sum_{\beta\in\mathcal{B}}\sum_{\alpha\in\mathcal{A}}D^\beta(a_{\alpha\beta}D^\alpha u)=\sum_{\beta\in\mathcal{B}}D^\beta f_\beta$$
as in theorem \ref{VariablePrin}, but only assume $u\in W^{\mathcal{A},1}(B^\kappa_{R_0})$. Then in fact $u\in C^\mathcal{A}(B^\kappa_{R_0})$, and
$$[|u|]_{\mathcal{A},\infty;B^\kappa_{\theta R_0}}\leq C\left([|u|]_{\mathcal{A},1;B^\kappa_{R_0}}
+\int_0^{r}\frac{\bar\omega_f(\rho)}{\rho}d\rho+r^\gamma\int_{r}^{R_0}\frac{\bar\omega_f(\rho)}{\rho^{1+\gamma}}d\rho\right),$$
and the same estimate on the modulus of continuity for $\{D^\alpha u\}_{\alpha\in\mathcal{A}}$ holds.
\end{corollary}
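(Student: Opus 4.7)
The plan is to handle this in two stages: a regularization step that bootstraps $u\in W^{\mathcal{A},1}$ to a sequence of smooth approximants $u^\varepsilon\in C^\mathcal{A}$ to which Theorem \ref{VariablePrin} directly applies, and then an interior absorption argument that upgrades the $[|u|]_{\mathcal{A},\infty}$ factor on the right-hand-side of Theorem \ref{VariablePrin} into an $[|u|]_{\mathcal{A},1}$ factor.

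For the regularization, I would convolve $u$ with a $\kappa$-homogeneous mollifier $\eta_\varepsilon$ and set $u^\varepsilon:=u*\eta_\varepsilon\in C^\infty$ on $B^\kappa_{R_0-\varepsilon}$. A direct computation gives
$$Pu^\varepsilon=\sum_\beta D^\beta\bigl[(f_\beta)*\eta_\varepsilon+\sum_\alpha C^\varepsilon_{\alpha\beta}\bigr],\quad C^\varepsilon_{\alpha\beta}(x)=\int[a_{\alpha\beta}(x)-a_{\alpha\beta}(y)](D^\alpha u)(y)\eta_\varepsilon(x-y)\,dy,$$
a Friedrichs-type commutator. One checks that $\bar\omega_{\tilde f^\varepsilon}(\rho)\le \bar\omega_f(\rho)+C\bar\omega_a(\rho)\cdot[|u|]_{\mathcal{A},1;B^\kappa_{R_0}}/\rho^{|\kappa|}$ in mean-oscillation form, with the correction controlled by the hypothesized Dini modulus of the coefficients; the point is that $C^\varepsilon_{\alpha\beta}\to 0$ in $L^1_{\mathrm{loc}}$ and its mean oscillation is uniformly dominated by the same Dini majorant that appears for $f_\beta$. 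Applying Theorem \ref{VariablePrin} to $u^\varepsilon$ on $B^\kappa_{R_0-\varepsilon}$ then yields uniform-in-$\varepsilon$ oscillation bounds for $D^\alpha u^\varepsilon$, from which $u\in C^\mathcal{A}(B^\kappa_{\theta R_0})$ and the modulus-of-continuity estimate follow by passing to the limit.

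The second stage upgrades the estimate so that the prefactor is an $L^1$ norm. For any $x_0$ and $R\le R_0/2$ with $B^\kappa_R(x_0)\Subset B^\kappa_{R_0}$, combine the trivial inequality
$$\sup_{B^\kappa_{R/2}(x_0)}|D^\alpha u^\varepsilon|\le\dashint_{B^\kappa_{R/2}(x_0)}|D^\alpha u^\varepsilon|+\mathrm{osc}_{B^\kappa_{R/2}(x_0)}D^\alpha u^\varepsilon\le\frac{C}{R^{|\kappa|}}[|u^\varepsilon|]_{\mathcal{A},1;B^\kappa_R(x_0)}+\mathrm{osc}_{B^\kappa_{R/2}(x_0)}D^\alpha u^\varepsilon$$
with Theorem \ref{VariablePrin} applied on $B^\kappa_R(x_0)$ with $r=R/2$. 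Summing over $\alpha\in\mathcal{A}$ gives
$$[|u^\varepsilon|]_{\mathcal{A},\infty;B^\kappa_{R/2}(x_0)}\le\frac{C}{R^{|\kappa|}}[|u^\varepsilon|]_{\mathcal{A},1;B^\kappa_R(x_0)}+\Phi(R)[|u^\varepsilon|]_{\mathcal{A},\infty;B^\kappa_R(x_0)}+\Psi(R),$$
where $\Phi(R)\to 0$ as $R\to 0$ by the Dini integrability of $\bar\omega_a$, and $\Psi(R)$ collects the $\bar\omega_f$ Dini integrals. Choosing $R$ small enough that $\Phi(R)\le 1/2$, the standard iteration of Lemma \ref{absorbtion} on a sequence of nested balls (or equivalently a Simon-type absorption argument on a family of shrinking radii between $R$ and $R/2$) absorbs the $[|u^\varepsilon|]_{\mathcal{A},\infty}$ term on the right. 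A finite covering of $B^\kappa_{\theta R_0}$ by such small balls $B^\kappa_R(x_0)\Subset B^\kappa_{R_0}$ then produces the global $L^\infty$ bound in terms of $[|u|]_{\mathcal{A},1;B^\kappa_{R_0}}$, uniformly in $\varepsilon$, whence the limit $\varepsilon\to 0$ concludes the proof.

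The main obstacle is the controlled handling of the Friedrichs commutator $C^\varepsilon_{\alpha\beta}$: it must not only tend to $0$ in $L^1_{\mathrm{loc}}$ (which is standard once $a_{\alpha\beta}$ is continuous and $D^\alpha u\in L^1$), but more crucially its mean-oscillation modulus must be dominated by a Dini majorant uniformly in $\varepsilon$, so that the right-hand-side of Theorem \ref{VariablePrin} applied to $u^\varepsilon$ does not blow up. This requires pairing the Dini mean-oscillation of the coefficients with the Lebesgue point/averaging structure of $D^\alpha u\in L^1$; once this estimate is in place, the remainder of the argument is a routine combination of covering, absorption, and limit-passage.
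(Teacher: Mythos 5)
Your proposal diverges from the paper's proof at the first (and critical) step, and that divergence introduces a genuine gap. The paper does \emph{not} mollify $u$; it mollifies the \emph{coefficients} $a_{\alpha\beta}$ and the data $f_\beta$, then constructs a small correction $v_\sigma$ via a Neumann-series parametrix $v_\sigma = (1-L_\sigma)^{-1}M^{(\sigma)}(P^{(\sigma)}u - \sum_\beta D^\beta f_\beta^{(\sigma)})$ so that $u - v_\sigma$ solves a \emph{smooth-coefficient} equation with smooth right-hand side. Hypoellipticity (Lemma \ref{hypoellipticvar}) then gives $u-v_\sigma \in C^\infty$, and Theorem \ref{VariablePrin} applies to $u-v_\sigma$ directly; letting $\sigma\to 0$ along a subsequence finishes the argument. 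This carefully avoids ever applying the oscillation estimate to an equation with a commutator source term.

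Your Friedrichs-commutator route, by contrast, hinges on the estimate you flag as ``the main obstacle,'' and this estimate does not close. Under the sole hypothesis $D^\alpha u \in L^1$, the commutator $C^\varepsilon_{\alpha\beta}(x)$ is essentially $\omega_a(\varepsilon)$ times the local average of $|D^\alpha u|$ over $B^\kappa_\varepsilon(x)$; near a singularity of an $L^1$ function this local average has no uniform bound, and the resulting mean-oscillation modulus of $C^\varepsilon_{\alpha\beta}$ is not uniformly Dini in $\varepsilon$. Your own displayed inequality $\bar\omega_{\tilde f^\varepsilon}(\rho)\leq\bar\omega_f(\rho)+C\bar\omega_a(\rho)[|u|]_{\mathcal{A},1}/\rho^{|\kappa|}$ makes this visible: the $\rho^{-|\kappa|}$ factor means the putative majorant satisfies $\int_0^{r}\bar\omega_a(\rho)\rho^{-|\kappa|-1}\,d\rho=\infty$ unless $\bar\omega_a(\rho)=o(\rho^{|\kappa|})$, a hypothesis far stronger than Dini mean oscillation and not assumed. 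Moreover this formula contradicts your immediately following assertion that the commutator's mean oscillation is ``uniformly dominated by the same Dini majorant that appears for $f_\beta$''; as written the claim is internally inconsistent and unsubstantiated. Thus Theorem \ref{VariablePrin} cannot be applied to $u^\varepsilon$ with bounds uniform in $\varepsilon$, and the first stage of your argument fails as stated. If you insist on mollifying $u$ rather than the coefficients, you would have to establish a substantially sharper bilinear commutator estimate pairing Dini mean oscillation with Lebesgue-point structure of $L^1$ functions, and it is not clear such a bound holds.

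Your second stage --- converting the $[|u|]_{\mathcal{A},\infty}$ prefactor to $[|u|]_{\mathcal{A},1}$ via $\sup \leq \text{mean} + \text{osc}$, a small-radius absorption of the $L^\infty$ term, and a finite covering --- is sound in outline and corresponds to the paper's citation of the analogous reasoning in Dong--Kim's Theorem 1.5. The gap is entirely in the mollification step.
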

\begin{proof}
Extend $a_{\alpha\beta}$ and $f_\beta$ to be zero outside $B^\kappa_{R_0}(x_0)$. Fix some compact subset $K\Subset B^\kappa_{R_0}(x_0)$. Let
$$P^{(\sigma)}=\sum_{\beta\in\mathcal{B}}\sum_{\alpha\in\mathcal{A}}D^\beta(a^{(\sigma)}_{\alpha\beta}D^\alpha ),$$
where $a_{\alpha\beta}^{(\sigma)}$ is the mollification of scale $\sigma$, and let
$$\tilde P^{(\sigma)}=\sum_{\beta\in\mathcal{B}}\sum_{\alpha\in\mathcal{A}}D^\beta(\tilde a^{(\sigma)}_{\alpha\beta}D^\alpha ),$$
where $\tilde a^{(\sigma)}_{\alpha\beta}$ equals $a^{(\sigma)}_{\alpha\beta}$ on some neighbourhood $\Omega\Subset B^\kappa_{R_0}(x_0)$ of $K$, and equals $a_{\alpha\beta}^{(\sigma)}(x_0)$ outside this neighbourhood. Further, let
$$P_0^{(\sigma)}=\sum_{\beta\in\mathcal{B}}\sum_{\alpha\in\mathcal{A}}D^\beta(a^{(\sigma)}_{\alpha\beta}(x_0)D^\alpha ).$$
Its symbol will be denoted as $p_0^{(\sigma)}(\xi)$. Then as $\sigma\to0$, $a^{(\sigma)}_{\alpha\beta}$ satisfy a uniform hypoelliticity condition (\ref{hypoell}) on $\Omega$, and their modulus of continuity are controlled by $\omega_a$. Similarly, the modulus of continuity of $f_\beta^{(\sigma)}$ are controlled by $\omega_f$. Furthermore, $a^{(\sigma)}_{\alpha\beta}$ and $f_\beta^{(\sigma)}$ are uniformly bounded and converge almost everywhere to $a_{\alpha\beta}$ and $f_\beta$ respectively.

Consider the operator
$$
\begin{aligned}
L_\sigma:v&\to \left(\frac{1-\phi(\xi)}{p_0^{(\sigma)}(\xi)}[(P^{(\sigma)}_0-\tilde P^{(\sigma)})v]^\wedge(\xi)\right)^\vee\\
&=\sum_{\beta\in\mathcal{B}}\left(\frac{[1-\phi(\xi)](i\xi)^\beta}{p_0^{(\sigma)}(\xi)}
\left[\sum_{\alpha\in\mathcal{A}}(a_{\alpha\beta}^{(\sigma)}(x_0)-\tilde a_{\alpha\beta}^{(\sigma)})D^\alpha v\right]^\wedge(\xi)\right)^\vee
\end{aligned}
$$
form $W^{\mathcal{A},2}(\mathbb{R}^n)$ to itself, where $\phi\in C_0^\infty(\mathbb{R}^n)$ equals 1 in a neighbourhood of 0. By the Plancherel theorem, it is easily checked that for every $\beta\in\mathcal{B}$, the multiplier
$$M^{(\sigma)}:v\to\left(\frac{[1-\phi(\xi)]}{p_0^{(\sigma)}(\xi)}\hat{v}(\xi)\right)^\vee$$
maps $\sum_{\beta\in\mathcal{B}}D^\beta L^2(\mathbb{R}^n)$ continuously to $W^{\mathcal{A},2}(\mathbb{R}^n)$ with norm independent of $\sigma$, so by assuming $R_0$ small enough (depending on $\bar\omega_a$ only), one might as well suppose the $(a_{\alpha\beta}^{(\sigma)}(x_0)-\tilde a_{\alpha\beta}^{(\sigma)})$'s are so close to zero that the norm of $L_{\sigma}$ is less than $1/2$. This legitimates us to define a $v_\sigma\in W^{\mathcal{A},2}(\mathbb{R}^n)$ by
$$
v_\sigma=(1-L_\sigma)^{-1}M^{(\sigma)}\left(P^{(\sigma)}u-\sum_{\beta\in\mathcal{B}}D^\beta f_\beta^{(\sigma)}\right).
$$
Then it satisfies
$$
v_\sigma=L_\sigma v_\sigma+M^{(\sigma)}\sum_{\beta\in\mathcal{B}}D^\beta(f_\beta-f_\beta^{(\sigma)})+M^{(\sigma)}[(P^{(\sigma)}-P)u],
$$
and obviously $\|v_\sigma\|_{\mathcal{A},2;\mathbb{R}^n}\to0$ as $\sigma\to0$. Choose a subsequence $v_{\sigma_k}$ such that $v_{\sigma_k}$ and $D^\alpha v_{\sigma_k}$ converges to 0 a.e..

Now a direct computation gives that in $\Omega$,
$$
\begin{aligned}
P^{(\sigma)}_0v_\sigma&=[P^{(\sigma)}_0v_\sigma-P^{(\sigma)}v_\sigma]-\check{\phi}*[P^{(\sigma)}_0v_\sigma-P^{(\sigma)}v_\sigma]\\
&\quad-\sum_{\beta\in\mathcal{B}}D^\beta f^{(\sigma)}_\beta+\sum_{\beta\in\mathcal{B}}D^\beta(\check{\phi}*f^{(\sigma)}_\beta)+P^{(\sigma)}u-\check{\phi}*(P^{(\sigma)}u),
\end{aligned}
$$
or
$$
\begin{aligned}
P^{(\sigma)}(u-v_\sigma)&=\sum_{\beta\in\mathcal{B}}D^\beta f^{(\sigma)}_\beta\\
&\quad+\check{\phi}*[P^{(\sigma)}_0v_\sigma-P^{(\sigma)}v_\sigma]+\check{\phi}*\left(P^{(\sigma)}u
-\sum_{\beta\in\mathcal{B}}D^\beta f^{(\sigma)}_\beta\right).
\end{aligned}
$$
By the hypoellipticity lemma (\ref{hypoellipticvar}), we see that $u-v_\sigma$ is smooth, and the modulus of continuity of the derivatives $\{D^\alpha(u-v_\sigma)\}_{\alpha\in\mathcal{A}}$ are estimated as in theorem \ref{VariablePrin}. The reasoning in the proof of theorem 1.5 of \cite{DK} gives the desired estimate for the upper bound of $\{D^\alpha(u-v_\sigma)\}_{\alpha\in\mathcal{A}}$. Now by standard properties of mollification operators, $D^\alpha(u-v_{\sigma_k})\to D^\alpha u$ almost everywhere and in $L^1$. The desired estimate follows immediately.
\end{proof}

We note that the $L^p$ estimate follows quite similarly, and the general case in which lower-order derivatives are involved in the differential operator is treated by iteratively using the $L^p$ estimate and the anisotropic Sobolev-type theorem \ref{Sobolev}, which is quite similar to the standard arguments given in \cite{Camp}. The most general result is then stated as follows:
\begin{corollary}
Suppose that $u\in W^{\mathcal{A},1}(B^\kappa_R)$ solves
\begin{equation}\label{localeq}
(P+H)u
=\sum_{\alpha\in\mathcal{A},\beta\in\mathcal{B}}a_{\alpha\beta}D^{\alpha+\beta}u
+\sum_{\substack{\alpha,\beta\in\mathbb{N}_0^n: \\ \kappa\cdot(\alpha+\beta)<m}}a_{\alpha\beta}D^{\alpha+\beta}u
=\sum_{\beta\in\mathcal{B}}D^\beta f_\beta,
\end{equation}
in the sense of distribution, where the symbol $p(\xi)$ of $P$ satisfies the general requirement (\ref{Varellip}). Then in fact $u\in C^{\mathcal{A}}(B^\kappa_R)$, and the similar estimate as in theorem \ref{VariablePrin} holds.
\end{corollary}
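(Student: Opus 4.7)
The plan is to treat $H$ perturbatively by moving $-Hu$ to the right-hand side. Rewriting the equation as
$$Pu=\sum_{\beta\in\mathcal{B}}D^\beta f_\beta-Hu,$$
one observes that every derivative $D^{\alpha+\beta}u$ occurring in $Hu$ has total $\kappa$-weight strictly less than $m$, and can therefore be written as $D^{\beta'}(a_{\alpha\beta}D^{\alpha'}u)$ for some $\beta'\in\mathcal{B}$ and some $\alpha'\in\mathcal{A}'\setminus\mathcal{A}$ (a purely algebraic rearrangement using the $\mathcal{B}$-completeness of $\mathcal{A}$). In this way the equation takes the form $Pu=\sum_{\beta\in\mathcal{B}}D^\beta\tilde f_\beta$, where $\tilde f_\beta$ equals $f_\beta$ plus a sum of lower-order derivatives $D^{\alpha'}u$ with $\alpha'\in\mathcal{A}'$ multiplied by smooth coefficients. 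This is exactly the form treated by the previous corollary, and the task reduces to controlling $\tilde f_\beta$ in the Dini-mean-oscillation sense.

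The mechanism for this is the bootstrap sketched right after Theorem \ref{Local}: iteratively combine the $L^p$ variant of the previous corollary with the anisotropic Sobolev-type Theorem \ref{Sobolev}. Starting from $u\in W^{\mathcal{A},1}(B^\kappa_R)$, Theorem \ref{Sobolev} yields $D^{\alpha'}u\in L^{q_1}_{\mathrm{loc}}(B^\kappa_R)$ for every $\alpha'\in\mathcal{A}'$ and some $q_1<|\kappa|/(|\kappa|-1)$. Feeding the lower-order terms into the right-hand side, the $L^p$ estimate --- obtained by repeating the proof of the previous corollary but invoking Lemma \ref{Campanato2}(A) in place of (B) --- produces $D^\alpha u\in L^{q_1}_{\mathrm{loc}}$ for $\alpha\in\mathcal{A}$ on a slightly smaller concentric ball. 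Each subsequent cycle improves $L^{q_k}$ to $L^{q_{k+1}}=L^{|\kappa|q_k/(|\kappa|-q_k)}$ while shrinking the ball by a fixed factor; after finitely many rounds $q_k>|\kappa|$, and the H\"older portion of Theorem \ref{Sobolev} supplies, for every $\alpha'\in\mathcal{A}'$, a quantitative estimate
$$|D^{\alpha'}u(x)-D^{\alpha'}u(y)|\leq C|x-y|_\kappa^{\sigma}\left([|u|]_{\mathcal{A},1;B^\kappa_R}+\int_0^{R_0}\frac{\bar\omega_f(\rho)}{\rho}d\rho\right)$$
on some intermediate ball, with exponent $\sigma\in(0,1)$ depending only on $\kappa$ and the chain of Sobolev exponents used.

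Equipped with this H\"older control, one computes the Dini mean oscillation of $\tilde f_\beta$: it is dominated by $\bar\omega_f$, plus the $L^\infty$ bound on the smooth coefficients of $H$ times the H\"older modulus $\rho^\sigma$ of the lower-order derivatives, plus the $L^\infty$ bound on those derivatives times $\bar\omega_a$. Since $\rho\mapsto\rho^\sigma$ is Dini-integrable near zero, $\bar\omega_{\tilde f}$ inherits the same Dini integrability as $\bar\omega_f$. Applying the previous corollary to $Pu=\sum_\beta D^\beta\tilde f_\beta$ on this intermediate ball, and then covering $B^\kappa_{\theta R_0}$ by finitely many concentric balls as in the proof of Theorem \ref{VariablePrin}, yields $u\in C^\mathcal{A}(B^\kappa_R)$ together with the advertised modulus-of-continuity estimate.

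The hard part will be the bookkeeping through the bootstrap: tracking the growth of the Sobolev exponent $q_k$, the fixed shrinkage of the ball, and the accumulation of constants over finitely many iterations, while making sure the Dini-integrability of the modified source survives each cycle. What makes the scheme close cleanly is that the H\"older modulus produced at the final bootstrap step is a positive power of the distance --- hence Dini-integrable --- and therefore fits into the framework of Theorem \ref{VariablePrin} without loss. This is also precisely what frees the argument from any regularity-structural assumption on $\mathcal{A}$ of the kind imposed in \cite{Simon1}.
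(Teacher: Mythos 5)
Your overall strategy --- move $Hu$ to the right-hand side, bootstrap integrability via Lemma \ref{Campanato2}(A) and Theorem \ref{Sobolev} until the lower-order derivatives are H\"older, hence Dini, and then invoke the corollary for the principal operator --- is exactly the scheme the paper indicates in the one-sentence remark preceding this corollary, so the high-level plan is right.

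However, the opening algebraic step is not correct as written. You claim each term $a_{\alpha\beta}D^{\alpha+\beta}u$ of $Hu$ ``can therefore be written as $D^{\beta'}(a_{\alpha\beta}D^{\alpha'}u)$ for some $\beta'\in\mathcal{B}$ and some $\alpha'\in\mathcal{A}'\setminus\mathcal{A}$.'' Two things go wrong. First, the coefficients in this corollary are variable with only Dini-mean-oscillation regularity, so they may fail to be even once differentiable; moving $D^{\beta'}$ past $a_{\alpha\beta}$ would generate terms $D^\gamma a_{\alpha\beta}\cdot D^{\alpha'+\beta'-\gamma}u$ that need not exist. Second, the multi-index decomposition itself can fail: if $\mathcal{B}\neq\{0\}$ then every $\beta'\in\mathcal{B}$ carries weight $\beta'\cdot\kappa=m'>0$, so writing $\alpha+\beta=\alpha'+\beta'$ with $\beta'\in\mathcal{B}$ requires $\alpha+\beta\geq\beta'$ componentwise --- which an arbitrary $\alpha+\beta$ with $\kappa\cdot(\alpha+\beta)<m$ need not satisfy --- and it further requires $\kappa\cdot(\alpha+\beta)\leq m-m'$ so that $\alpha'\in\mathcal{A}'$, whereas the hypothesis only gives $\kappa\cdot(\alpha+\beta)\leq m-1$. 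So ``$\mathcal{B}$-completeness'' does not rescue the rearrangement; for the argument to run one must assume (as the paper implicitly does) that the lower-order part $H$ is already presented in divergence form $\sum_{\alpha'\in\mathcal{A}'\setminus\mathcal{A},\,\beta\in\mathcal{B}}D^\beta(a_{\alpha'\beta}D^{\alpha'}\cdot)$, at which point the ``rearrangement'' is a tautology rather than something deduced.

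A second, smaller gap: when you feed $\tilde f_\beta=f_\beta-\sum_{\alpha'}a_{\alpha'\beta}D^{\alpha'}u$ into the $L^p$ estimate, its norm contains $\|D^{\alpha'}u\|_p$, which is among the quantities being bounded; the cycle closes only after an interpolation inequality of the type $\|D^{\alpha'}u\|_{p;B^\kappa_{\theta R}}\leq\varepsilon\sum_{\alpha\in\mathcal{A}}\|D^\alpha u\|_{p;B^\kappa_R}+C_\varepsilon\|u\|_{p;B^\kappa_R}$ for $\alpha'\in\mathcal{A}'\setminus\mathcal{A}$ is used to absorb, as in the standard Campanato treatment. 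Your sketch does not mention this absorption, though it is needed at every step of the bootstrap; Corollary \ref{Interpolation} and the scaling structure supply it, so the omission is fixable but should be named rather than hidden in ``bookkeeping.''
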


Finally, let us sketch how to deal with systems. We consider the distributions $u$ and $\{f_\beta\}_{\beta\in\mathcal{B}}$ as $\mathbb{C}^N$-valued, and the coefficients $\{a_{\alpha\beta}\}$ as $M_N(\mathbb{C})$-valued functions. The ellipticity condition (\ref{Varellip}) is thus replaced by the following condition:

\emph{As $\xi$ exhausts $\partial B^\kappa_1(0)$ and $x$ exhausts $B^\kappa_{R_0}$, the collection of matrices
$$\sum_{\alpha\in\mathcal{A},\beta\in\mathcal{B}}a_{\alpha\beta}(x)(i\xi)^{\alpha+\beta}$$
remains within a compact subset of $GL_N(\mathbb{C})$.}

For any fixed $x_0\in B^\kappa_R$, the parametrix then takes the form
$$\left[(1-\phi(\xi))\left(\sum_{\alpha\in\mathcal{A},\beta\in\mathcal{B}}a_{\alpha\beta}(x_0)(i\xi)^{\alpha+\beta}\right)^{-1}
\sum_{\beta\in\mathcal{B}}(i\xi)^\beta\hat{f}_\beta(\xi)\right]^\vee.$$
With this parametrix construction, the previous arguments for a single equation remain valid. This enables us to deal with, for example, parabolic systems of any order.

\end{spacing}

\begin{thebibliography}{19}
\bibitem{Camp} Campanato S., \emph{Equazioni paraboliche del secondo ordine e spazi $\mathfrak{L}^{2,\theta}(\Omega,\delta)$.} Annali di Matematica Pura ed Applicata 73.1 (1966): 55-102.
\bibitem{CW} Coifman R. R., Weiss G., \emph{Analyse harmonique non-commutative sur certains espaces homog\`{e}nes}, 1971.
\bibitem{DK} Dong H., Kim S., \emph{On $C^1,C^2$ and Weak Type-(1, 1) Estimates for Linear Elliptic Operators}. Communications in Partial Differential Equations 42.3 (2017): 417-435.
\bibitem{DEK} Dong H., Escauriaza L., Kim S. \emph{On $C^1,C^2$ and Weak Type-(1, 1) Estimates for Linear Elliptic Operators, Part II}. Mathematische Annalen 370.1-2 (2018): 447-489.
\bibitem{Hor} H\"{o}rmander L., \emph{Analysis of Linear Partial Differential Operators Vol.II}, Springer-Verlag Berlin Heidelberg 1983.
\bibitem{P} Peetre J., \emph{On Convolution Operators Leaving $L^{p,\lambda}$ Spaces Invariant}, Ann. Mat. Pura Appl. 72 (1966):
295-304.
\bibitem{Simon1} Simon L., \emph{Schauder Estimates by Scaling}, Calc. Var. 5 (1997), 391¨C407.
\bibitem{Wang} Wang X., \emph{Schauder Estimates for Elliptic and Parabolic Equations}, Chinese Annals of Mathematics-Series B 27.6 (2006), 637-642.
\end{thebibliography}
\end{document}